\documentclass[oneside,english]{amsart}
\usepackage[T1]{fontenc}
\usepackage[latin9]{inputenc}
\usepackage{units}
\usepackage{amstext}
\usepackage{amsthm}
\usepackage{amssymb}

\makeatletter
\numberwithin{equation}{section}
\numberwithin{figure}{section}
\numberwithin{table}{section}
\theoremstyle{plain}
\newtheorem{thm}{\protect\theoremname}[section]
\theoremstyle{definition}
\newtheorem{defn}[thm]{\protect\definitionname}
\theoremstyle{plain}
\newtheorem{lem}[thm]{\protect\lemmaname}

\makeatother

\usepackage{babel}
\providecommand{\definitionname}{Definition}
\providecommand{\lemmaname}{Lemma}
\providecommand{\theoremname}{Theorem}

\begin{document}
\title{POLYNOMIAL EXTENSION OF STRONGER CENTRAL SET THEOREM NEAR ZERO}
\author{Sujan Pal, Anik Pramanick}
\email{sujan2016pal@gmail.com}
\address{Department of Mathematics, University of Kalyani, Kalyani, Nadia-741235,
West Bengal, India}
\email{pramanick.anik@gmail.com}
\address{Department of Mathematics, University of Kalyani, Kalyani, Nadia-741235,
West Bengal, India}
\begin{abstract}
Furstenberg introduced the notion of Central sets in 1981. Later in
1990 V. Bergelson and N. Hindman proved a different but an equivalent
version of the central set theorem. In 2008 D. De, N. Hindman and
D. Strauss proved a stronger version of central sets theorem. Hindman
and Leader first introduced the concept of near zero. Recently S.
Goswami, L. Baglini and S. Patra did a polynomial extension of Stronger
Central sets theorem in 2023. In this article, we proved the Polynomial
stronger Central Sets Theorem near zero.
\end{abstract}

\maketitle

\section{Introduction}

In \cite{key-7} Furstenberg defined a central subset of $\mathbb{N}$
in terms of notions from topological dynamics. (Specifically, a subset
$A$ of $\mathbb{N}$ is central if and only if there exist a dynamical
system $(X,T)$, points $x$ and $y$ of $X$, and a neighborhood
$U$ of $y$ such that $y$ is uniformly recurrent, $x$ and $y$
are proximal, and $A=\left\{ n\in\mathbb{N}:T_{n}\left(x\right)\in U\right\} $.
See \cite{key-7} for the definitions of \textquotedblleft dynamical
system\textquotedblright , \textquotedblleft proximal\textquotedblright ,
and \textquotedblleft uniformly recurrent\textquotedblright ) He showed
that if $\mathbb{N}$ is divided into finitely many classes, then
one of them must be central, and he proved the following theorem. 
\begin{thm}
\label{Classic CST-1} Let $l\in\mathbb{N}$ and for each $i\in\left\{ 1,2,...,l\right\} ,$
let $\left(y_{i,n}\right)_{n=1}^{\infty}$be a $i$-many sequences
in $\mathbb{Z}$. Let $C$ be a central subset of $\mathbb{N}.$ Then
there exists sequences $\left(a_{n}\right)_{n=1}^{\infty}$in $\mathbb{N}$
and $\left(H_{n}\right)_{n=1}^{\infty}$ in $\mathcal{P}_{f}\left(\mathbb{N}\right)$
such that
\end{thm}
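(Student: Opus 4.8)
The plan is to recast the hypothesis in the algebra of $\beta\mathbb{N}$ and to build the two sequences by a single induction. The conclusion to be reached is the usual one: $\max H_n<\min H_{n+1}$ for every $n$, and for each $i\in\{1,\dots,l\}$ and every $F\in\mathcal{P}_f(\mathbb{N})$ one has $\sum_{n\in F}\bigl(a_n+\sum_{t\in H_n}y_{i,t}\bigr)\in C$. First I would invoke the standard dictionary: a set is central precisely when it belongs to some minimal idempotent of $(\beta\mathbb{N},+)$. So fix a minimal idempotent $p$ with $C\in p$, and pass to the star set $C^{\star}=\{x\in C:-x+C\in p\}$, which again lies in $p$ and satisfies the absorption property that $-x+C^{\star}\in p$ for every $x\in C^{\star}$. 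Throughout I will keep all partial sums inside $C^{\star}$ rather than merely inside $C$.

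Next I would set up the induction on $m$, constructing $a_1,\dots,a_m$ and $H_1,\dots,H_m$ so that the supports are blocked off ($\max H_n<\min H_{n+1}$) and so that for every $i$ and every nonempty $F\subseteq\{1,\dots,m\}$ the corresponding partial sum lies in $C^{\star}$. For the inductive step, let $B$ be the intersection of $C^{\star}$ with all sets $-x+C^{\star}$, where $x$ ranges over the finitely many partial sums already produced; since each such $x$ lies in $C^{\star}$, absorption gives $B\in p$ and $B\subseteq C^{\star}$. The step is then completed once I produce a single $a\in\mathbb{N}$ and a single $H\in\mathcal{P}_f(\mathbb{N})$ with $\min H>\max H_m$ such that $a+\sum_{t\in H}y_{i,t}\in B$ for all $i$ simultaneously; indeed, every new partial sum is either old (handled by induction) or of the form $x+\bigl(a+\sum_{t\in H}y_{i,t}\bigr)$ with $x$ old, which lands in $C^{\star}$ because $a+\sum_{t\in H}y_{i,t}\in B\subseteq -x+C^{\star}$.

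The hard part is exactly this simultaneity over all $l$ sequences: a naive choice of a singleton $H=\{t_0\}$ forces $a$ into $\bigcap_i(B-y_{i,t_0})$, which is typically empty, so the finite block $H$ must itself be used to absorb the discrepancies between the sequences. To arrange it I would move to the product semigroup by forming the vector sequence $\vec{y}_n=(y_{1,n},\dots,y_{l,n})\in\mathbb{Z}^{l}$, note that $B^{l}=B\times\dots\times B$ is a member of the diagonal image $\bar{p}$ of $p$ in $\beta(\mathbb{Z}^{l})$, and use the Galvin--Glazer theorem to produce an idempotent supported on the finite sums $\sum_{t\in H}\vec{y}_t$. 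Choosing these idempotents inside a common minimal left ideal, so that their members restrict correctly to $B^{l}$ along the diagonal, yields the required diagonal shift $a$ and finite-sum block $H$ at once. Reconciling the shift $a$, which is governed by $p$, with the block $H$, which is governed by the additive finite-sum structure of the sequences, is the genuine obstacle, and it is where minimality of $p$ (as opposed to mere idempotency, which already suffices for the case $l=1$) is used.

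Finally, once the induction runs for all $m$, an arbitrary finite $F$ is contained in some $\{1,\dots,m\}$, so the maintained invariant places every $\sum_{n\in F}\bigl(a_n+\sum_{t\in H_n}y_{i,t}\bigr)$ in $C^{\star}\subseteq C$, giving conclusion (2), while the blocking of the supports gives conclusion (1).
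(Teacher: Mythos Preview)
The paper does not prove this theorem. It is stated in the introduction as historical background, attributed to Furstenberg~\cite{key-7}, with no proof attached; the paper's own contributions begin in Section~3. There is therefore nothing in the paper to compare your proposal against.

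On its own merits, your architecture is the standard one and is sound: choose a minimal idempotent $p$ with $C\in p$, pass to $C^{\star}$, and run an induction on $m$ that keeps every partial sum in $C^{\star}$, so that the inductive step reduces to hitting a single set $B\in p$ simultaneously for all $l$ sequences. The gap is exactly where you locate it. Your paragraph on the product semigroup names the right ingredients---the diagonal image $\bar p$ in $(\beta\mathbb{Z})^{l}$, an idempotent in the closure of the finite sums of $\vec y_n$, and minimality---but it does not actually argue anything: you have not shown how to combine $\bar p$ with a finite-sums idempotent so that $B^{l}$ is a member of the result, nor why the resulting shift can be taken on the diagonal. That step is the entire content of the theorem, and ``choosing these idempotents inside a common minimal left ideal, so that their members restrict correctly to $B^{l}$ along the diagonal'' is an aspiration, not a proof. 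The two usual ways to close it are to prove that members of minimal idempotents are $J$-sets (ultimately via Hales--Jewett; this is the device the present paper and~\cite{key-8} build on), or to carry out the product-semigroup argument in full. As written, your proposal stops short of either.
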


$\left(1\right)$ for all $n$, $\max H_{n}<\min H_{n+1}$ and

$\left(2\right)$ for all $F\in\mathcal{P}_{f}\left(\mathbb{N}\right)$
and all $i\in\left\{ 1,2,...,l\right\} ,$
\[
\sum_{n\in F}\left(a_{n}+\sum_{t\in H_{n}}y_{i,t}\right)\in C.
\]

There are many extensions of this Theorem in the literature in different
direction. In one direction Bergelson, Morera and Johnson \cite{key-2}
established the polynomial version of the above Theorem. 
\begin{thm}
\label{p cst}Let G be a countable abelian group, let $j\in\mathbb{N}$
and let $(y_{\alpha})_{\alpha\in\mathcal{F}}$ be an IP-set in $\mathbb{Z}^{j}$.
Let $F\subset P(\mathbb{Z}^{j},\mathbb{Z})$ and let $A\subset\mathbb{Z}$
be a central set in $\mathbb{Z}$ Then there exist an IP-set $(x_{\alpha})_{\alpha\in\mathcal{F}}$
in $\mathbb{Z}$ and a sub-IP-set $(z_{\alpha})_{\alpha\in\mathcal{F}}$
of $(y_{\alpha})_{\alpha\in\mathcal{F}}$ such that
\[
\forall f\in F\qquad\qquad\forall\beta\in\mathcal{F\qquad\qquad}x_{\beta}+f(z_{\beta})\in A.
\]
\end{thm}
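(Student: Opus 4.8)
The plan is to pass to the algebra of the Stone-Čech compactification $\beta\mathbb{Z}$ and to run an inductive construction governed by PET (polynomial exhaustion) induction. First I would invoke the standard characterization of centrality: since $A$ is central in $\mathbb{Z}$, there is a minimal idempotent $p$ in the smallest two-sided ideal of the compact right-topological semigroup $(\beta\mathbb{Z},+)$ with $A\in p$. I would normalize so that every $f\in F$ satisfies $f(0)=0$, and, since the sought IP-sets are built inductively, first treat the case of finite $F$, recovering the general case at the end by a diagonal argument.

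With $\mathcal{F}=\mathcal{P}_f(\mathbb{N})$, the objects to construct are generators $x_1,x_2,\dots\in\mathbb{Z}$, defining $x_\alpha=\sum_{n\in\alpha}x_n$, together with pairwise disjoint blocks $\beta_1<\beta_2<\cdots$ in $\mathcal{F}$, defining the sub-IP-set through $z_\alpha=\sum_{n\in\alpha}y_{\beta_n}$. The induction maintains, after stage $N$, that for every nonempty $\gamma\subseteq\{1,\dots,N\}$ and every $f\in F$ one has $x_\gamma+f(z_\gamma)\in A$. The additive side is routine: the idempotent identity $p+p=p$ guarantees that whenever $A\in p$ the set of $x$ with $-x+A\in p$ again lies in $p$, which lets one select the next generator $x_{N+1}$ while keeping all the linear constraints inside members of $p$.

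The heart of the matter, and the step I expect to be the main obstacle, is the nonlinearity of the members of $F$. When a fresh block $\beta_{N+1}$ is appended, the quantity $f\bigl(z_\gamma+y_{\beta_{N+1}}\bigr)$ does not split additively over $\gamma$, so the idempotent cannot be applied to it directly. The plan is to control the increment $f(z_\gamma+y_{\beta_{N+1}})-f(z_\gamma)$, a polynomial in $y_{\beta_{N+1}}$ of strictly smaller weight, and to remove these lower-complexity terms by a van der Corput / finite-difference argument nested inside a PET induction on the weight vector of $F$. The delicate bookkeeping is to make a single choice of $\beta_{N+1}$ honour simultaneously the $p$-membership conditions attached to every $\gamma$ and every $f$; the standard device is to take iterated $p$-limits along the directions $\beta_{N+1}\to p$ of the given IP-set, exploiting that membership in $p$ survives finite intersections and the idempotent shifts above.

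Finally, having carried the induction through for finite $F$, I would recover the conclusion for an arbitrary $F\subseteq P(\mathbb{Z}^j,\mathbb{Z})$ by exhausting it along a chain $F_1\subseteq F_2\subseteq\cdots$ and threading the construction through nested IP-rings, so that a diagonal sub-IP-set $(z_\alpha)_{\alpha\in\mathcal{F}}$ and a diagonal family of generators satisfy $x_\beta+f(z_\beta)\in A$ for every $f\in F$ and every $\beta\in\mathcal{F}$ at once.
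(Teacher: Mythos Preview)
The paper does not prove Theorem~\ref{p cst} at all: it is quoted in the introduction as a result of Bergelson, Johnson and Moreira \cite{key-2}, with no accompanying proof in this manuscript. There is therefore nothing in the paper to compare your proposal against.

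As a sketch of how the original result is obtained, your outline is broadly in the right spirit---PET induction on the weight vector of the polynomial family is indeed the engine behind the Bergelson--Johnson--Moreira theorem---but a few points deserve caution. First, the original argument is not really run inside $\beta\mathbb{Z}$ via iterated $p$-limits as you describe; it is an IP-limit argument in the sense of Furstenberg--Katznelson, carried out along IP-rings rather than along a single idempotent ultrafilter. The distinction matters because the sub-IP-set $(z_\alpha)$ you must produce is extracted from the \emph{given} IP-set $(y_\alpha)$, and controlling that extraction is more naturally done with IP-rings than with the algebra of $\beta\mathbb{Z}$. Second, your phrase ``the standard device is to take iterated $p$-limits along the directions $\beta_{N+1}\to p$ of the given IP-set'' hides the real work: you have to show that the family of difference polynomials $y\mapsto f(d+y)-f(d)$, ranging over all accumulated $d$'s, stays inside a system of polynomials of strictly smaller PET complexity, and that a single choice of block handles all of them simultaneously. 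This is exactly the combinatorial core of PET, and it is not automatic from the idempotent identity $p+p=p$ alone. Your sketch gestures at the correct architecture but would need substantially more detail at precisely this step before it could be called a proof.
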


Both the Theorems have natural generalization over arbitrary countable
commutative group. In fact Hindman, Maleki and Strauss \cite{key-2}
extended the Theorem \ref{Classic CST-1} for arbitrary semigroup
considering countably many sequenses at a time. Further D. De, N.
Hindman and D. Strauss \cite{key-6-1} extended Theorem \ref{Classic CST-1}
considering arbitrary many sequence at a time. We present here the
commutative version only.
\begin{thm}
\label{st cst}Let $\left(S,+\right)$ be a commutative semigroup
and let $C$ be a central subset of $S$. Then there exist functions
$\alpha:\mathcal{P}_{f}\left(S^{\mathbb{N}}\right)\to S\text{ and }H:\mathcal{P}_{f}\left(S^{\mathbb{N}}\right)\to\mathcal{P}_{f}\left(\mathbb{N}\right)$
such that
\end{thm}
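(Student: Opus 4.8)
The plan is to use the algebraic characterization of central sets inside the Stone--\v{C}ech compactification $\beta S$: a set $C$ is central in $S$ precisely when it is a member of some minimal idempotent $p\in(\beta S,+)$. So I would begin by fixing such a minimal idempotent $p$ with $C\in p$ and passing to the ``starred'' set $C^{\star}=\{x\in C:-x+C\in p\}$. The two facts I would use repeatedly are that $C^{\star}\in p$ and that $-x+C^{\star}\in p$ for every $x\in C^{\star}$; both follow from $p+p=p$, and they are the engine that lets a partial sum be extended. The functions $\alpha$ and $H$ will be built by recursion on $|F|$, and the invariant I would maintain is the stronger assertion that every sum as in the conclusion $(2)$ lies not merely in $C$ but in $C^{\star}$.

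For the recursive step, fix $F\in\mathcal{P}_{f}(S^{\mathbb{N}})$ and suppose $\alpha(G)$ and $H(G)$ have been defined for all proper subsets $G\subsetneq F$. Since $F$ is finite there are only finitely many such $G$, hence only finitely many partial sums $s=\sum_{i=1}^{k}\big(\alpha(G_i)+\sum_{t\in H(G_i)}f_i(t)\big)$ arising from chains $G_1\subsetneq\cdots\subsetneq G_k\subsetneq F$ with $f_i\in G_i$; by the invariant each such $s$ lies in $C^{\star}$. I would then form $B=C^{\star}\cap\bigcap_{s}(-s+C^{\star})$, which is a member of $p$ as a finite intersection of sets in $p$. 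The step now reduces to producing a single $a=\alpha(F)\in S$ and a single $H=H(F)\in\mathcal{P}_{f}(\mathbb{N})$, with $\min H$ chosen larger than every $\max H(G)$ for $G\subsetneq F$ (securing condition $(1)$), such that $a+\sum_{t\in H}f(t)\in B$ for every $f\in F$ simultaneously. Membership in $B\subseteq C^{\star}$ handles the length-one chain consisting of $F$ alone, while membership in each $-s+C^{\star}$ glues the new increment onto every admissible lower partial sum and keeps the total in $C^{\star}$, restoring the invariant.

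The heart of the argument---and the step I expect to be the main obstacle---is exactly this simultaneous choice of $a$ and $H$ valid for all $f\in F$ at once. Here I would invoke the fact that any member of a minimal idempotent of $\beta S$ is a $J$-set: for any finite family $\{f^{(1)},\dots,f^{(n)}\}$ of sequences in $S$ there exist $a\in S$ and $H\in\mathcal{P}_{f}(\mathbb{N})$ (and one may force $\min H$ past any prescribed bound by applying the property to the tails of the sequences) with $a+\sum_{t\in H}f^{(j)}(t)\in B$ for all $j$. This is where the commutative-semigroup subtlety genuinely lives, since one cannot simply pick $a$ and then solve for $H$; the standard route is to observe that $B$, lying in a minimal idempotent, is piecewise syndetic, and that piecewise syndetic sets are $J$-sets, the latter proved by a pigeonhole/translation argument exploiting the syndetic shift that witnesses piecewise syndeticity. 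Granting this, applying it to $B$ and the family $F$ yields the required $a$ and $H$.

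Finally I would assemble the pieces. Because the construction at $F$ consults only its finitely many proper subsets, no enumeration of the possibly uncountable $\mathcal{P}_{f}(S^{\mathbb{N}})$ is needed and the recursion defines $\alpha$ and $H$ everywhere. Condition $(1)$ holds by the choice of $\min H(F)$ at each stage, and condition $(2)$ follows from the maintained invariant together with $C^{\star}\subseteq C$, reading any chain $G_1\subsetneq\cdots\subsetneq G_m$ as a lower partial sum from $G_1,\dots,G_{m-1}$ extended by the increment attached to $G_m$. The only substantial input is the $J$-set property of central sets; everything else is bookkeeping driven by the idempotent identity $p=p+p$.
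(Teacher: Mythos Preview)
The paper does not actually prove this statement: Theorem~\ref{st cst} is quoted as a known result, with the proof consisting solely of a citation to the literature. Your proposal is a correct outline of the standard De--Hindman--Strauss argument, using the idempotent $p$, the starred set $C^{\star}$, recursion on $|F|$, and the $J$-set property of members of minimal idempotents; in fact it mirrors almost exactly the structure the paper later employs in proving its own main result (the near-zero polynomial version), so there is nothing to correct and no genuine divergence to discuss.
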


$\left(1\right)$ If $F,G\in\mathcal{P}_{f}\left(S^{\mathbb{N}}\right)$
and $F\subsetneq G$ then $\max H\left(F\right)<\min H\left(G\right)$
and

$\left(2\right)$ If $m\in\mathbb{N},G_{1},G_{2},....,G_{m}\in\mathcal{P}_{f}\left(S^{\mathbb{N}}\right)$;$G_{1}\subsetneq G_{2}\subsetneq....\subsetneq G_{m}$;
and for each $i\in\left\{ 1,2,....,m\right\} ,\left(y_{i,n}\right)\in G_{i},$
then 
\[
\sum_{i=1}^{m}\left(\alpha\left(G_{i}\right)+\sum_{t\in H\left(G_{i}\right)}y_{i,t}\right)\in C.
\]
. 
\begin{proof}
\cite[Theorem 2.2]{key-7}.
\end{proof}
Very natural question arises here that does there exist a polynomial
generalization of Theorem \ref{st cst} in the direction of Theorem
\ref{p cst}. In \cite{key-8} Goswami, Baglini and Patra answered
this question affirmatively.
\begin{thm}
Let $j\in\mathbb{N}$, let $G$ be a countable abelian group and let
$F$ be a finite family of polynomial maps from $G^{j}$ to G such
that $f\left(0\right)=0$ for each $f\in F$. Then for every piecewise
syndetic set $A\subseteq G$ and every IP set $\left(x_{\alpha}\right)_{\alpha\in\mathcal{P}_{f}\left(\mathbb{N}\right)}$,
in $G^{j}$ there exist $a\in A$ and $\beta\in\mathcal{P}_{f}\left(\mathbb{N}\right)$
such that
\[
a+f\left(x_{\beta}\right)\in A
\]
 for all $\beta\in\mathcal{P}_{f}\left(\mathbb{N}\right),f\in F$.
\end{thm}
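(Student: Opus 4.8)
The plan is to pass to the Stone-Čech compactification $\beta G$ and exploit the semigroup structure of $(\beta G,+)$. Recall that $A$ is piecewise syndetic exactly when $\overline{A}\cap K(\beta G)\neq\emptyset$, where $K(\beta G)$ is the smallest two-sided ideal, so I would first fix $p\in K(\beta G)$ with $A\in p$, let $L$ be the minimal left ideal containing $p$, and choose a minimal idempotent $e\in L$ with $e+p=p$. Since $A\in p=e+p$, the set $B:=\{t\in G:-t+A\in p\}$ belongs to $e$, hence $B$ is central. The point of this reduction is that the whole theorem collapses to a single recurrence statement: it suffices to find one $\beta\in\mathcal{P}_f(\mathbb{N})$ with $f(x_\beta)\in B$ for every $f\in F$. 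Indeed, then $-f(x_\beta)+A\in p$ for each $f$, so $A\cap\bigcap_{f\in F}\bigl(-f(x_\beta)+A\bigr)$ is a finite intersection of members of $p$, therefore nonempty, and any $a$ in it satisfies $a+f(x_\beta)\in A$ for all $f\in F$.

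The engine of the argument is Bergelson's Polynomial Exhaustion Technique (PET induction), carried out with idempotent ultrafilters and adapted to the IP-structure. I would prove by PET induction the statement that for every central $B$ (a member of a minimal idempotent $e$), every IP-set $(x_\alpha)$ in $G^j$, and every finite family $F$ of polynomials $G^j\to G$ with $f(0)=0$, there is a $\beta$ with $f(x_\beta)\in B$ for all $f\in F$. The induction runs on the PET-weight of $F$, a well-founded complexity measure built from the degrees and leading terms. In the base case $F$ consists of linear maps, where $f(0)=0$ turns each $f$ into a linear form and the claim reduces to the linear polynomial central-set theorem, Theorem \ref{p cst} (equivalently the IP form of Theorem \ref{st cst}), applied to $B$ and $(x_\alpha)$. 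For the inductive step I would pass to the differenced family obtained by shifting the variable along a generator $x_\gamma$ of the IP-set and subtracting a minimal-complexity member of $F$; by the definition of the PET-weight these differences are strictly simpler, the hypothesis $f(0)=0$ eliminates constant terms and anchors the recurrence, and since $(x_\alpha)$ is an IP-set the shifted system is again parametrized by a sub-IP-set. The inductive hypothesis then gives recurrence for the simpler family, which is summed back along the IP-tree using the idempotent identity $B^\star=\{t:-t+B\in e\}\in e$ to recover recurrence for $F$ itself.

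The step I expect to be the main obstacle is making the differencing lower a \emph{single} complexity measure for the whole finite family $F$ simultaneously, rather than simplifying one polynomial while manufacturing new high-complexity ones; this forces a careful choice of the PET-weight and of the order in which polynomials are eliminated so that each pass genuinely attacks the top-degree terms of all members at once, guaranteeing termination. Equally delicate is the IP-bookkeeping: the generator index $\gamma$ and the successive sub-IP-sets must be selected so that the finitely many partial indices produced across the induction can be amalgamated into one common $\beta$ valid for every $f\in F$, which I would secure through closure of $e$ under $B\mapsto B^\star$ together with the fact that finite IP-sums of members of $e$ remain in $e$. Once the induction closes and $f(x_\beta)\in B$ holds for all $f$, the first-step reduction delivers the desired $a\in A$ with $a+f(x_\beta)\in A$ for every $f\in F$.
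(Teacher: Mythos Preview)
First, note that the paper does not prove this theorem at all: it is stated in the introduction as a result of Goswami, Luperi Baglini and Patra \cite{key-8} (building on \cite{key-2}), with no proof supplied. So there is no ``paper's own proof'' to compare against; the question is simply whether your sketch stands on its own.

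It does not. The reduction in your first paragraph is fine, but the statement you then set out to prove by PET induction is false already in the linear base case. You claim that for every central set $B\subseteq G$, every IP-set $(x_\alpha)_{\alpha\in\mathcal P_f(\mathbb N)}$ in $G^{j}$, and every finite family $F$ of polynomials with $f(0)=0$, there exists $\beta$ with $f(x_\beta)\in B$ for all $f\in F$. Take $G=\mathbb Z$, $j=1$, $F=\{\mathrm{id}\}$, and let $x_n=2^{2^{n}}$. The IP-set $I=FS(\langle x_n\rangle)$ has arbitrarily long gaps, so its complement $B=\mathbb N\setminus I$ is thick. Thick sets are central (their closure in $\beta\mathbb N$ contains a left ideal, hence a minimal idempotent), yet $B$ contains no $x_\beta$ whatsoever. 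Equivalently: central sets are \emph{not} IP$^{*}$ in general, which is exactly what your intermediate statement asserts when $F=\{\mathrm{id}\}$. Theorem~\ref{p cst}, which you invoke for the base case, does not say this either; it produces $x_\beta+f(z_\beta)\in A$ with an additional translate $x_\beta$, not $f(z_\beta)\in A$.

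The theorem itself is of course true, but its proof does not go through ``$f(x_\beta)\in B$ for a single central $B$.'' The arguments in \cite{key-2} and \cite{key-8} instead combine a coloring reduction with the polynomial Hales--Jewett theorem (this is also the mechanism the present paper uses in the near-zero analogue, Theorem~3.5): one colors a large combinatorial cube by which cell of a finite cover the corresponding configuration lands in, applies PHJ to extract a monochromatic combinatorial subspace, and reads off $a$ and $\beta$ simultaneously. The shift $a$ is produced in the same step as $\beta$, not recovered afterwards from membership of $f(x_\beta)$ in a fixed central set. If you want to repair your outline, you should replace the false target ``$f(x_\beta)\in B$'' by the correct recurrence target and run PET/PHJ on the pair $(a,\beta)$ jointly.
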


Another direction of Central Sets Theorem was due to Hindman and Leader
\cite{key-9}. Authors introduced the notion of central set near zero
for dense subsemigroup of $(\mathbb{R},+)$. In fact where as central
sets live at infinity, central sets near zero live near zero and satisfy
conclusion like central sets.
\begin{thm}
\label{Classic CST} Let $S$ be a dense subsemigroup of $(\mathbb{R},+)$
for each $i\in\mathbb{N}$ let $\left(y_{i,n}\right)_{n=1}^{\infty}$
be a sequence in $S$ converging to zero in the usual topology of
$\mathbb{R}$. Let $C$ be a central set near zero in $S$. Then there
exists sequences $\left(a_{n}\right)_{n=1}^{\infty}$ in $S$ converging
to zero in the usual topology of $\mathbb{R}$, and a sequence $\left(H_{n}\right)_{n=1}^{\infty}$
in $\mathcal{P}_{f}\left(\mathbb{N}\right)$ such that
\end{thm}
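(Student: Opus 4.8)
The plan is to run the standard algebraic (ultrafilter) argument for central sets, carried out inside the semigroup of ultrafilters near zero rather than in all of $\beta S_{d}$. First I would unwind the hypothesis: in the Hindman--Leader theory a set $C$ that is central near zero is a member of some minimal idempotent $p$ of the compact right topological semigroup $0^{+}=\bigcap_{\epsilon>0}\mathrm{cl}_{\beta S_{d}}\bigl((0,\epsilon)\cap S\bigr)$. Two features of this $p$ will drive everything: since $p=p+p$, for every $A\in p$ the set $A^{\star}=\{x\in A:-x+A\in p\}$ again lies in $p$ and satisfies $-x+A^{\star}\in p$ for each $x\in A^{\star}$; and since $p\in 0^{+}$, we have $(0,\epsilon)\cap S\in p$ for every $\epsilon>0$, which is exactly the extra leverage needed to force the output sequence toward zero.

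Next I would set up a recursion producing $a_{n}\in S$ and $H_{n}\in\mathcal{P}_{f}(\mathbb{N})$ stage by stage, maintaining the invariant that every combined partial sum $\sigma_{F,i}=\sum_{n\in F}\bigl(a_{n}+\sum_{t\in H_{n}}y_{i,t}\bigr)$ built so far (over nonempty $F$ in the current index range and each admissible $i$) lies not merely in $C$ but in $C^{\star}$; membership in $C^{\star}$ is what lets the recursion continue. At stage $m+1$ I would form the finite intersection $B=C^{\star}\cap\bigcap_{F,i}\bigl(-\sigma_{F,i}+C^{\star}\bigr)$, which is a member of $p$ because each $\sigma_{F,i}\in C^{\star}$; to drive $a_{n}\to 0$ I would also intersect with the near-zero window $(0,1/(m+1))\cap S$, still a member of $p$. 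The verification that, once a suitable new block $a_{m+1}+\sum_{t\in H_{m+1}}y_{i,t}\in B^{\star}$ is found with $\min H_{m+1}>\max H_{m}$, all the enlarged partial sums again land in $C^{\star}$ is the routine translate bookkeeping already visible in Theorem~\ref{Classic CST-1}.

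The heart of the argument --- and the step I expect to be the main obstacle --- is the inductive choice of a single $a_{m+1}$ and a single finite block $H_{m+1}$ that work simultaneously for all the relevant sequences $\langle y_{i,n}\rangle$ and that respect the near-zero constraint. I would obtain this from a near-zero analogue of the classical absorption lemma: a secondary finite induction on the index $i$ that, using only $B^{\star}\in p$ together with the star property, selects the entries of $H_{m+1}$ and then the correction term $a_{m+1}$ so that $a_{m+1}+\sum_{t\in H_{m+1}}y_{i,t}\in B^{\star}$ for every admissible $i$. Here the hypothesis that each $\langle y_{i,n}\rangle$ converges to zero is essential: by taking $\min H_{m+1}$ large the inner sums $\sum_{t\in H_{m+1}}y_{i,t}$ become as small as we please, so that the common correction $a_{m+1}$ can be chosen inside $(0,1/(m+1))\cap S$ without leaving $B^{\star}$. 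The delicate point, absent from the classical at-infinity setting, is that every set we manipulate must keep its mass near zero; confirming that the selection lemma can be executed while staying inside $S\cap(0,\epsilon)$, rather than merely inside $S$, is where the near-zero refinement must be checked with care.
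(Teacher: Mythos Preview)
The paper does not actually prove this statement: its entire proof is the citation ``\cite{key-9}'', i.e.\ it defers to the original Hindman--Leader paper. Your proposal, by contrast, sketches the proof itself, and the outline you give---work inside $0^{+}$, use the $\star$-operation on a minimal idempotent $p$, intersect at each stage with the near-zero window $(0,1/(m+1))\cap S\in p$, and run the absorption argument over the finitely many admissible indices $i$ at that stage---is precisely the Hindman--Leader argument that the paper is citing. So your approach is correct and coincides with the intended (cited) proof; there is nothing further to compare.
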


$\left(1\right)$ for all $n$, $\max H_{n}<\min H_{n+1}$ and

$\left(2\right)$ for all $F\in\mathcal{P}_{f}\left(\mathbb{N}\right)$
and all $i\in\left\{ 1,2,...,l\right\} ,$
\[
\sum_{n\in F}\left(a_{n}+\sum_{t\in H_{n}}y_{i,t}\right)\in C.
\]

\begin{proof}
\cite{key-9}.
\end{proof}
The polynomial generalization of the above Theorem was established
in \cite{key-4}.
\begin{thm}
Let $\left(S,+\right)$ be a dense subsemigroup of $\left(\mathbb{R},+\right)$
containing $0$ such that $\left(S\cap\left(0,1\right),\cdot\right)$
is a subsemigroup of $\left(\left(0,1\right),\cdot\right)$. Let $A$
be a central set near zero in $S$ and $L\in\mathcal{P}_{f}\left(\mathbb{P}\left(S,S\right)\right)$.
Then for any $\delta>0$ , there exist $\left(a_{n}\right)_{n=1}^{\infty}$
in $S$ converging to zero in the usual topology of $\mathbb{R}$,
and a sequence $\left(H_{n}\right)_{n=1}^{\infty}$ in $\mathcal{P}_{f}\left(\mathbb{N}\right)$
such that
\end{thm}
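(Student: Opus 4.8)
The plan is to run the standard $J$-set/$C$-set machinery of Hindman and Strauss, but carried out entirely inside the near-zero semigroup and with the polynomial maps of $L$ built into the recurrence. First I would recall the algebraic setting for ``near zero'': writing $S_d$ for $S$ with the discrete topology and working in $\beta S_d$, one sets $0^+=\bigcap_{\epsilon>0}\overline{S\cap(0,\epsilon)}$, which is a compact right topological subsemigroup of $(\beta S_d,+)$. By definition $A$ is central near zero precisely when $A\in p$ for some minimal idempotent $p$ of $0^+$; fix such a $p$. Passing to $A^{\star}=\{x\in A:-x+A\in p\}$, one has $A^{\star}\in p$ and, for every $x\in A^{\star}$, $-x+A^{\star}\in p$. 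The feature of $0^+$ that will drive the ``converging to zero'' conclusion is that $S\cap(0,\epsilon)\in p$ for every $\epsilon>0$, so every member of $p$ meets every neighbourhood of $0$.

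Next I would isolate the correct polynomial notion near zero. Call a set $B\subseteq S$ a \emph{polynomial $J$-set near zero} (relative to $L$) if for every finite family of sequences in $S$ tending to $0$ and every $\epsilon>0$ there exist $a\in S\cap(0,\epsilon)$ and $H\in\mathcal{P}_{f}(\mathbb{N})$, with $\min H$ as large as we please, such that the relevant polynomial sums $a+\sum_{t\in H}f(y_{t})$ lie in $B$ for all $f\in L$ and all chosen sequences, and are themselves within $\epsilon$ of $0$. The aim of this step is to prove that \emph{every} member of the fixed minimal idempotent $p$ is a polynomial $J$-set near zero. The clean route is to show that $J_{0}=\{q\in 0^+:\text{every }B\in q\text{ is a polynomial }J\text{-set near zero}\}$ is a two-sided ideal of $0^+$; since $p$ is minimal it then lies in $J_{0}$, and in particular $A^{\star}$ is a polynomial $J$-set near zero. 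Establishing that $J_{0}$ is an ideal is where the polynomial input enters: one adapts the polynomial recurrence of \cite{key-8} and its near-zero refinement \cite{key-4} by PET (polynomial-exhaustion) induction on the degrees occurring in $L$, carried out inside $0^+$ rather than at infinity.

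With that in hand the construction of $(a_{n})_{n=1}^{\infty}$ and $(H_{n})_{n=1}^{\infty}$ is a routine induction mirroring the proofs of Theorems \ref{st cst} and \ref{Classic CST}. Choosing a sequence $\epsilon_{n}\downarrow 0$ with $\epsilon_{1}\le\delta$, at stage $n$ the admissible next values are governed by a finite intersection of translates $-w+A^{\star}$ taken over all partial sums $w$ built from the data already selected; this intersection still lies in $p$, hence is again a polynomial $J$-set near zero. Applying the $J$-set property of this set with parameter $\epsilon_{n}$ and with $\min H_{n}>\max H_{n-1}$ yields $a_{n}\in S\cap(0,\epsilon_{n})$ and $H_{n}\in\mathcal{P}_{f}(\mathbb{N})$ such that every required configuration $\sum_{n\in F}\bigl(a_{n}+\sum_{t\in H_{n}}f(y_{t})\bigr)$ lands in $A^{\star}\subseteq A$. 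Condition $(1)$ is immediate from $\min H_{n}>\max H_{n-1}$, and $a_{n}\in(0,\epsilon_{n})$ forces $a_{n}\to 0$; here the multiplicative hypothesis on $S\cap(0,1)$ guarantees that the monomials, and hence the polynomial images $f(y_{t})$ of small arguments, remain inside $S$ and tend to $0$, so the whole configuration genuinely lives near zero.

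The main obstacle is the middle step: proving that $J_{0}$ is a two-sided ideal of $0^+$, equivalently that the family of polynomial $J$-sets near zero is partition regular and stable under the shifts forced by the idempotent structure. This is exactly the point at which the polynomial van der Waerden / IP-recurrence phenomenon must be transported from $(\beta S_d,+)$ to the restricted semigroup $0^+$, and where the assumption that $(S\cap(0,1),\cdot)$ is a multiplicative subsemigroup is indispensable, since it is what keeps all polynomial values simultaneously in $S$ and arbitrarily close to $0$. Once the ideal property is secured, the remainder is the familiar idempotent bookkeeping.
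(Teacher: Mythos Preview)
Your overall architecture matches the paper's: work in $0^{+}\subseteq\beta S_{d}$, fix an idempotent $p$ with $A\in p$, pass to $A^{\star}$, show that every member of $p$ is a polynomial $J$-set near zero via an ideal argument, and then build $(a_{n})$ and $(H_{n})$ inductively. However, there is a genuine gap in the inductive step, and it is exactly the place where the polynomial version diverges from the linear one.

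You write the target configuration as $\sum_{n\in F}\bigl(a_{n}+\sum_{t\in H_{n}}f(y_{t})\bigr)$ and then describe the step as ``routine idempotent bookkeeping'' with translates $-w+A^{\star}$. That expression is the linear Central Sets Theorem shape; the conclusion here is
\[
\sum_{n\in F} a_{n} \;+\; P\Bigl(\sum_{n\in F}\sum_{t\in H_{n}} y_{i,t}\Bigr)\in A,
\]
and $P$ does \emph{not} distribute over $F$. Concretely, if $w=\sum_{i<n}a_{i}+P(d)$ with $d=\sum_{i<n}\sum_{t\in H_{i}}y_{t}$ is one of the partial sums already in $A^{\star}$, then to force the length-$n$ sum into $A^{\star}$ you need $a_{n}+\bigl[P(z+d)-P(d)\bigr]\in -w+A^{\star}$, where $z=\sum_{t\in H_{n}}y_{t}$. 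The quantity $P(z+d)-P(d)$ is not $P(z)$; it is a \emph{new} polynomial in $z$ depending on the data chosen at earlier stages. The paper handles this by enlarging $L$ at each step to $M=L\cup\{Q_{P,d}:P\in L,\ d\in R\}$ with $Q_{P,d}(y)=P(y+d)-P(d)$, observes that $Q_{P,d}\in\mathbb{P}(S,S)$ because the coefficients are non-negative integers, and only then applies the $J_{p}$-near-zero property to the enlarged family $M$. Without this enlargement your induction does not close, so the step is not routine in the way you claim.

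A secondary remark: you locate the main obstacle at ``$J_{0}$ is a two-sided ideal of $0^{+}$'' and propose PET induction for it. In the paper the ideal property (Theorem~\ref{Ideal }) is a short direct computation using only the definition of $J_{p}$-set near zero and the translate structure of ultrafilter products; no PET is required there. The substantive combinatorial input (polynomial Hales--Jewett, as in \cite{key-4}) is used earlier, to show that piecewise syndetic sets near zero are $J_{p}$-sets near zero, which is what guarantees $K(0^{+})\subseteq\mathcal{J}_{p}^{0}$ and hence that your central-near-zero set $A$ is governed by such an idempotent at all.
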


$\left(1\right)$ for all $n$, $\max H_{n}<\min H_{n+1}$ and

$\left(2\right)$ for all $F\in\mathcal{P}_{f}\left(\mathbb{N}\right)$
and all $i\in\left\{ 1,2,...,l\right\} ,$
\[
\left(\sum_{n\in F}a_{n}+P\left(\sum_{n\in F}\sum_{t\in H_{n}}y_{i,t}\right)\right)\in C.
\]

In the present article our aim is to generalize the above theorem
in considering arbitrary many sequences at once.
\begin{thm}
\label{Main} Let $\left(S,+\right)$ be a dense subsemigroup of $\left(\mathbb{R},+\right)$
containing $0$ such that $\left(S\cap\left(0,1\right),\cdot\right)$
is a subsemigroup of $\left(\left(0,1\right),\cdot\right)$. Let $A$
be a $C_{p}^{0}$ set in $S$ and $L\in\mathcal{P}_{f}\left(\mathbb{P}\left(S,S\right)\right)$.
Then for each $\delta\in\left(0,1\right)$, there exist functions
$\alpha_{\delta}:\mathcal{P}_{f}\left(\mathcal{T}_{0}\right)\to S$
and $H_{\delta}:\mathcal{P}_{f}\left(\mathcal{T}_{0}\right)\to\mathcal{P}_{f}\left(\mathbb{N}\right)$
such that 
\begin{enumerate}
\item $\alpha_{\delta}\left(F\right)<\delta$ for each $F\in\mathcal{P}_{f}\left(\mathcal{T}_{0}\right)$,
\item If $F,G\in\mathcal{P}_{f}\left(\mathcal{T}_{0}\right)$ and $F\subset G$,
then $\max H_{\delta}\left(F\right)<\min H_{\delta}\left(G\right)$
and
\item If $n\in\mathbb{N}$ and $G_{1},G_{2},...,G_{n}\in\mathcal{P}_{f}\left(\mathcal{T}_{0}\right),\,G_{1}\subset G_{2}\subset.....\subset G_{n}$
and $f_{i}\in G_{i},i=1,2,...,n.$ then
\[
\sum_{i\in1}^{n}\alpha_{\delta}\left(G_{i}\right)+P\left(\sum_{i\in1}^{n}\sum_{t\in H_{\delta}\left(G_{i}\right)}f_{i}\left(t\right)\right)\in A.
\]
 for all $P\in L$, Where 
\[
\mathcal{T}_{0}=\left\{ \left(x_{n}\right)_{n\in\mathbb{N}}\in S^{\mathbb{N}}:x_{n}\text{ converges to zero in usual topology of }\mathbb{R}\right\} .
\]
.
\end{enumerate}
\end{thm}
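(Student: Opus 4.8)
The plan is to carry out the usual idempotent--ultrafilter construction underlying central set theorems, but inside the compact right topological semigroup $\left(0^{+},+\right)$, where $0^{+}=\bigcap_{\varepsilon>0}\overline{S\cap\left(0,\varepsilon\right)}$ is formed in the Stone--\v{C}ech compactification of the discrete semigroup $S$. Unwinding the definition of a $C_{p}^{0}$ set furnishes a minimal idempotent $p$ in the relevant (polynomially adapted) subsemigroup of $0^{+}$ with $A\in p$. I would first replace $A$ by $A^{\star}=\left\{a\in A:-a+A\in p\right\}$; standard facts about idempotents give $A^{\star}\in p$ and $-a+A^{\star}\in p$ for every $a\in A^{\star}$. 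Two structural points of the hypotheses are used throughout: since $p\in0^{+}$ every member of $p$ meets $\left(0,\delta\right)$, which will let me force $\alpha_{\delta}\left(F\right)<\delta$ and hence give condition~(1); and the multiplicative closure of $S\cap\left(0,1\right)$ guarantees that the polynomial images $P\left(\,\cdot\,\right)$ of small elements of $S$ remain in $S$, so that the expressions in condition~(3) make sense and can be tested for membership in $A$.

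Next I would define $\alpha_{\delta}$ and $H_{\delta}$ by induction on $\left|F\right|$ over $F\in\mathcal{P}_{f}\left(\mathcal{T}_{0}\right)$. At the stage of $F$, having fixed the values on the proper subsets entering the chain condition, I choose $\alpha_{\delta}\left(F\right)\in\left(0,\delta\right)\cap A^{\star}$ together with a block $H_{\delta}\left(F\right)\in\mathcal{P}_{f}\left(\mathbb{N}\right)$ whose minimum exceeds every index already used, which secures~(1) and~(2). The content is to make the choice so that for every chain $G_{1}\subset\cdots\subset G_{n}$ ending at $F$, every selection $f_{i}\in G_{i}$, and every $P\in L$, one has $\sum_{i=1}^{n}\alpha_{\delta}\left(G_{i}\right)+P\!\left(\sum_{i=1}^{n}\sum_{t\in H_{\delta}\left(G_{i}\right)}f_{i}\left(t\right)\right)\in A$. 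Because only finitely many chains, finitely many tuples $\left(f_{i}\right)$, and finitely many $P\in L$ are active at this stage, the admissible choices form a finite intersection of sets, and it suffices to show that this intersection lies in $p$: then it is nonempty and, lying in $0^{+}$, has elements below $\delta$.

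The crux, and the step I expect to be the main obstacle, is proving that these constraint sets belong to $p$. In the linear stronger central set theorem this is immediate from the relation $-a+A^{\star}\in p$ and additivity; for a genuine polynomial $P$ one cannot split $P\left(x+y\right)$ additively, so the idempotent shift cannot be applied directly. My intended remedy is to exploit the near-zero hypotheses: because each $f_{i}\in\mathcal{T}_{0}$ converges to $0$, pushing $H_{\delta}\left(F\right)$ far enough out makes the inner arguments $\sum_{t}f_{i}\left(t\right)$ arbitrarily small, and since each $P\in L$ satisfies $P\left(0\right)=0$ the whole polynomial term is a controllable perturbation lying near $0$, so the computation stays within $0^{+}$ and the translate by $\sum_{i}\alpha_{\delta}\left(G_{i}\right)$ can be handled by the self-referential property of $A^{\star}$. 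Converting this heuristic into the required $p$-membership is the technical heart: adapting the ultrafilter computation used in the single-sequence polynomial near-zero theorem, one shows that the operator sending a new point to the relevant polynomial translate preserves $p$-membership up to a near-zero error, so that the finite intersection of constraint sets remains in $p$. Iterating the inductive step then produces $\alpha_{\delta}$ and $H_{\delta}$ with properties~(1)--(3), completing the proof.
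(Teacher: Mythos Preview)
Your skeleton (idempotent $p\in\mathcal{J}_{p}^{0}$, the set $A^{\star}$, induction on $\left|F\right|$) is correct and matches the paper, but the inductive step as you describe it has a genuine gap. You propose to collect the finitely many constraints at stage $F$ as a finite intersection of sets in $p$ and then pick $\alpha_{\delta}\left(F\right)$ from it; you then appeal to a ``near-zero error'' heuristic to handle the non-additivity of $P$. Neither step works as stated. First, the constraint at stage $F$ couples the unknown element $a=\alpha_{\delta}\left(F\right)$ with the unknown block $\gamma=H_{\delta}\left(F\right)$, so it is not a subset of $S$ that one can test for $p$-membership; the pair $\left(a,\gamma\right)$ must be produced simultaneously, and what does that is precisely the $J_{p}$-set near zero property applied to a suitable set in $p$. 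Second, and more importantly, smallness of $\sum_{t}f_{i}\left(t\right)$ buys nothing for membership in $A$: membership is a discrete, non-topological property, so ``controllable perturbation near $0$'' does not transfer $p$-membership through $P$.

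The missing idea is purely algebraic. For each $P\in L$ and each previously built partial sum $d=\sum_{i=1}^{n-1}\sum_{t\in H_{\delta}\left(G_{i}\right)}f_{i}\left(t\right)$ one introduces the auxiliary polynomial $Q_{P,d}\left(y\right)=P\left(y+d\right)-P\left(d\right)$, which again lies in $\mathbb{P}\left(S,S\right)$ because the coefficients of $P$ are nonnegative integers. Enlarging $L$ to the finite family $M=L\cup\left\{Q_{P,d}\right\}$ and setting $B=A^{\star}\cap\bigcap_{x\in M_{\delta}}\left(-x+A^{\star}\right)\in p$, one applies the $J_{p}$-set near zero property of $B$ (with the lower-bound refinement $\min\gamma>m$) to obtain $a\in\left(0,\delta\right)$ and $\gamma$ with $a+Q\left(\sum_{t\in\gamma}f\left(t\right)\right)\in B$ for all $Q\in M$, $f\in F$. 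The identity $P\left(y+d\right)=P\left(d\right)+Q_{P,d}\left(y\right)$ then gives
\[
\sum_{i=1}^{n}\alpha_{\delta}\left(G_{i}\right)+P\left(\sum_{i=1}^{n}\sum_{t\in H_{\delta}\left(G_{i}\right)}f_{i}\left(t\right)\right)=y+a+Q_{P,d}\Bigl(\sum_{t\in\gamma}f_{n}\left(t\right)\Bigr)\in y+B\subseteq A^{\star},
\]
with $y\in M_{\delta}$. This algebraic reduction, not any near-zero approximation, is what makes the polynomial step go through; once you insert it your outline becomes the paper's proof.
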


\section{Preliminaries}

Let $\left(S,\cdot\right)$ be any discrete semigroup, and $\beta S$
be the set of all ultrafilters on $S$, where the points of $S$ are
identified with the principal ultrafilters. Then $\left\{ \overline{A}:A\subseteq S\right\} $,
where $\overline{A}=\left\{ p\in\beta S:A\in p\right\} $ forms a
closed basis for the toplogy on $\beta S$. With this topology $\beta S$
becomes a compact Hausdorff space in which $S$ is dense, called the
Stone-\v{C}ech compactification of $S$. The operation of $S$ can
be extended to $\beta S$ making $\left(\beta S,\cdot\right)$ a compact,
right topological semigroup with $S$ contained in its topological
center. That is, for all $p\in\beta S$ the function $\rho_{p}:\beta S\to\beta S$
is continuous, where $\rho_{p}(q)=q\cdot p$ and for all $x\in S$,
the function $\lambda_{x}:\beta S\to\beta S$ is continuous, where
$\lambda_{x}(q)=x\cdot q$. For $p,q\in\beta S$ and $A\subseteq S$,
$A\in p\cdot q$ if and only if $\left\{ x\in S:x^{-1}A\in q\right\} \in p$,
where $x^{-1}A=\left\{ y\in S:x\cdot y\in A\right\} $. one can see
\cite{key-3} for an elementary introduction to the semigroup $\left(\beta S,\cdot\right)$
and its combinatorial applications. An element $p\in\beta S$ is called
idempotent if $p\cdot p=p$. A subset $A\subseteq S$ is called central
if and only if $A$ is an element of an idempotent ultrafilter $p$.

Here we will work for those dense subsemigroup $\left(\left(0,1\right),\cdot\right)$,
which are dense subsemigroup of $\left(\left(0,\infty\right),+\right)$,
in this case one can define 
\[
0^{+}=\bigcap_{\epsilon>0}cl_{\beta\left(0,1\right)_{d}}\left(0,\epsilon\right).
\]

$0^{+}$ is two sided ideal of $\left(\beta\left(0,1\right)_{d},\cdot\right)$,
so contains the smallest ideal. It is also a subsemigroup of $\left(\beta\mathbb{R}_{d},+\right)$.
As a compact right topological semigroup, $0^{+}$ has a smallest
two sided ideal. $K\left(0^{+}\right)$ is the smallest ideal contained
in $0^{+}$ . Central Sets near zero are the elements from the idempotent
in $K\left(0^{+}\right)$.

Before we proceed, let us define a shorthand notation to denote polynomials.

For any $m_{1},m_{2},m_{3}\in\mathbb{N}$, and $A\subseteq\mathbb{R}^{m_{1}}$,
$B\subseteq\mathbb{R}^{m_{2}}$ , and $C\subseteq\mathbb{R}^{m_{3}}$
, let $\mathbb{P}_{A}\left(B,C\right)$ be the set of all polynomials
from $B$ to $C$ with zero constant term and coefficients are in
$A$. Whenever $A=\mathbb{N}\cup\left\{ 0\right\} $, we will simply
write $\mathbb{P}\left(B,C\right)$ instead of $\mathbb{P}_{\mathbb{N}\cup\left\{ 0\right\} }\left(B,C\right)$.
We will use the notation $\mathbb{P}$ to denote $\mathbb{P}_{\mathbb{N}\cup\left\{ 0\right\} }\left(\mathbb{N},\mathbb{N}\right)$.

For our work we need to know about $J_{p}$-sets.
\begin{defn}
Let $A\subseteq\mathbb{N}$. Then:
\begin{enumerate}
\item $A$ is called a J-set if and only if for every $H\in\mathcal{P}_{f}\left(\mathbb{N}^{\mathbb{N}}\right)$,
there exists $a\in\mathbb{N}$ and $\beta\in\mathcal{P}_{f}\left(\mathbb{N}\right)$
such that for all $f\in H$, 
\[
a+\sum_{t\in\beta}f\left(t\right)\in A.
\]
 
\item A is called a $J_{p}$-set if and only if for every $F\in\mathcal{P}_{f}\left(\mathbb{P}\right)$,
and every $H\in\mathcal{P}_{f}\left(\mathbb{N}^{\mathbb{N}}\right)$,
there exists $a\in\mathbb{N}$ and $\beta\in\mathcal{P}_{f}\left(\mathbb{N}\right)$
such that for all $p\in F$ and all $f\in H$, 
\[
a+P\left(\sum_{t\in\beta}f\left(t\right)\right)\in A.
\]
\end{enumerate}
\end{defn}

\begin{thm}
\label{min} Let $l,m\in\mathbb{N}$, and $A\subseteq\mathbb{N}$
be a $J_{p}$-set. For each $i\in\left\{ 1,2,....,l\right\} $, let
$\left(x_{\alpha}^{i}\right)_{\alpha\in\mathcal{P}_{f}(\mathbb{N})}$
be an IP-set in $\mathbb{N}$. Then for all finite $F\in\mathcal{P}_{f}\left(\mathbb{P}\right)$,
there exist $a\in\mathbb{N}$ and $\beta\in\mathcal{P}_{f}\left(\mathbb{N}\right)$
such that $\min\beta>m$ and 
\[
a+P\left(x_{\beta}^{i}\right)\in A
\]
 for all $i\in\left\{ 1,2,....,l\right\} $ and $P\in F$.
\end{thm}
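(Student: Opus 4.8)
The plan is to read this statement as essentially an unpacking of the definition of a $J_p$-set, in which the IP-structure of the systems $(x^i_\alpha)$ is exactly the device that converts $J_p$-membership into the desired conclusion; the only genuine extra requirement is the lower bound $\min\beta>m$, which I would secure by a shift of indices.

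First I would use that each $(x^i_\alpha)_{\alpha\in\mathcal{P}_f(\mathbb{N})}$, being an IP-set in $\mathbb{N}$, arises from a base sequence $(y^i_n)_{n=1}^{\infty}$ in $\mathbb{N}$ with $x^i_\alpha=\sum_{n\in\alpha}y^i_n$. Viewing each base sequence as a function $y^i\in\mathbb{N}^{\mathbb{N}}$ via $y^i(n)=y^i_n$, one has $\sum_{t\in\beta}y^i(t)=x^i_\beta$ for every finite $\beta$, so the family $\{y^1,\dots,y^l\}$ lies in $\mathcal{P}_f(\mathbb{N}^{\mathbb{N}})$ and is precisely the kind of input the $J_p$-definition accepts.

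To force $\min\beta>m$ I would not apply the definition to the $y^i$ directly but to their shifts $g^i(t)=y^i_{t+m}$. Applying the $J_p$-property of $A$ to the given $F\in\mathcal{P}_f(\mathbb{P})$ together with $H=\{g^1,\dots,g^l\}$ yields $a\in\mathbb{N}$ and a finite $\gamma$ with $a+P\!\left(\sum_{t\in\gamma}g^i(t)\right)\in A$ for all $i$ and all $P\in F$. Setting $\beta=\{t+m:t\in\gamma\}$ gives $\min\beta\geq m+1>m$, and the re-indexing identity $\sum_{t\in\gamma}g^i(t)=\sum_{t\in\gamma}y^i_{t+m}=\sum_{s\in\beta}y^i_s=x^i_\beta$ turns the membership into $a+P(x^i_\beta)\in A$, which is the claim.

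I do not expect a serious obstacle here: the content of the theorem is mostly bookkeeping, reading the IP-system through its generating sequence and feeding it to the $J_p$-definition. The one point that needs care, and that I would flag as the main (mild) difficulty, is the passage from the unconstrained index set returned by the bare definition to one with $\min\beta>m$; the shift $t\mapsto t+m$ handles this cleanly because it respects the finite-sum structure, but one should check that the shifted functions still take values in $\mathbb{N}$ and that disjointness of the index blocks is preserved, so that the IP-identity $x^i_\beta=\sum_{s\in\beta}y^i_s$ remains valid.
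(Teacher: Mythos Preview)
Your argument is correct. The paper does not give its own proof of this statement---it simply cites \cite[Theorem 10]{key-8}---but your shift trick $g^i(t)=y^i_{t+m}$ followed by $\beta=m+\gamma$ is exactly the argument the paper uses for the analogous near-zero version (Lemma~\ref{min near zero}), so your approach matches the paper's method.
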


\begin{proof}
\cite[Theorem 10]{key-8}.
\end{proof}

\section{Polynomial stronger Central set theorem near zero}
\begin{defn}
Let $\left(S,+\right)$ be a dense subsemigroup of $\left(\mathbb{R},+\right)$
containing $0$ such that $\left(S\cap\left(0,1\right),\cdot\right)$is
a subsemigroup of $\left(\left(0,1\right),\cdot\right)$ and let $A\subseteq S$.
Then $A$ is a J-set near zero if and only if whenever $F\in P_{f}\left(\mathcal{T}_{0}\right)$
and $\delta>0$, there exist $a\in S\cap\left(0,\delta\right)$ and
$H\in P_{f}\left(\mathbb{N}\right)$ such that for each $f\in F$,
\[
a+\sum_{t\in H}f\left(t\right)\in A.
\]
 Where $\mathcal{T}_{0}$ be the set of all sequences in $S$ which
converge to zero.
\end{defn}

Here is the polynomial version of J-sets near zero. We call it $J_{p}$-set
near zero.
\begin{defn}
\label{JP set near zero} Let $\left(S,+\right)$ be a dense subsemigroup
of $\left(\mathbb{R},+\right)$ containing $0$ such that $\left(S\cap\left(0,1\right),\cdot\right)$is
a subsemigroup of $\left(\left(0,1\right),\cdot\right)$. Let $F$
be the set of finite polynomials each of which vanishes at 0. A set
$A\subseteq S$ is called a $J_{p}$-set near zero whenever $F\in\mathcal{P}_{f}\left(\mathbb{P}\left(S,S\right)\right),H\in P_{f}\left(\mathcal{T}_{0}\right)$
and $\delta>0$, there exist $a\in S\cap\left(0,\delta\right)$ and
$\beta\in P_{f}\left(\mathbb{N}\right)$ such that for each $f\in H$
and all $P\in F$, 
\[
a+P\left(\sum_{t\in\beta}f\left(t\right)\right)\in A.
\]
\end{defn}

Now we are going to prove the near zero version of theorem \ref{min}.
\begin{lem}
\label{min near zero} Let $\left(S,+\right)$ be a dense subsemigroup
of $\left(\mathbb{R},+\right)$ containing $0$ such that $\left(S\cap\left(0,1\right),\cdot\right)$
is a subsemigroup of $\left(\left(0,1\right),\cdot\right)$. Let $l,m\in\mathbb{N}$,
and $A\subseteq\mathbb{N}$ be a $J_{p}$-set near zero. For each
$F\in\mathcal{P}_{f}\left(\mathbb{P}\left(S,S\right)\right),H\in P_{f}\left(\mathcal{T}_{0}\right)$
and $\delta>0$, there exist $a\in S\cap\left(0,\delta\right)$ and
$\beta\in P_{f}\left(\mathbb{N}\right)$ with $\min\beta>m$ for any
$m\in\mathbb{N}$ 
\[
a+P\left(\sum_{t\in\beta}f\left(t\right)\right)\in A
\]
For each $f\in H$ and for all $P\in F$.
\end{lem}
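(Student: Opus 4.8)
The plan is to reduce the statement to the defining property of a $J_{p}$-set near zero (Definition \ref{JP set near zero}) by a shift of indices. The only feature of the desired conclusion that is not already handed to us verbatim by that definition is the lower bound $\min\beta>m$; the rest of the conclusion is identical. So the idea is to force the index set produced by the definition to lie entirely above $m$ by pre-shifting each sequence in $H$ by $m$ before invoking the definition, and then shifting the resulting index set back up.

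Concretely, I would fix $F\in\mathcal{P}_{f}\left(\mathbb{P}\left(S,S\right)\right)$, $H\in P_{f}\left(\mathcal{T}_{0}\right)$, $\delta>0$ and $m\in\mathbb{N}$. For each $f\in H$ define the shifted sequence $\tilde{f}\in S^{\mathbb{N}}$ by $\tilde{f}\left(n\right)=f\left(n+m\right)$. Since $f$ converges to zero in the usual topology of $\mathbb{R}$ and a tail of a null sequence is again a null sequence, $\tilde{f}\in\mathcal{T}_{0}$; hence $\tilde{H}:=\left\{ \tilde{f}:f\in H\right\}$ is a finite subset of $\mathcal{T}_{0}$, i.e. $\tilde{H}\in P_{f}\left(\mathcal{T}_{0}\right)$. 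Applying Definition \ref{JP set near zero} to the data $F$, $\tilde{H}$ and $\delta$ then yields $a\in S\cap\left(0,\delta\right)$ and $\gamma\in P_{f}\left(\mathbb{N}\right)$ such that for every $\tilde{f}\in\tilde{H}$ and every $P\in F$ one has $a+P\bigl(\sum_{t\in\gamma}\tilde{f}\left(t\right)\bigr)\in A$. I would then set $\beta:=\left\{ t+m:t\in\gamma\right\}$, so that $\min\beta=m+\min\gamma\geq m+1>m$, which supplies exactly the missing ingredient.

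Finally I would rewrite the inner sum by the substitution $s=t+m$: for each $f\in H$ we have $\sum_{t\in\gamma}\tilde{f}\left(t\right)=\sum_{t\in\gamma}f\left(t+m\right)=\sum_{s\in\beta}f\left(s\right)$, whence $a+P\bigl(\sum_{s\in\beta}f\left(s\right)\bigr)\in A$ for all $f\in H$ and all $P\in F$, as required. There is no substantive obstacle here — the argument is the standard translation trick used to upgrade a $J$-type property to one with a prescribed gap $\min\beta>m$ — and the only two points needing care are verifying that shifting preserves membership in $\mathcal{T}_{0}$ (so that the definition is applicable) and that the re-indexing of the inner sum is carried out correctly, so that the \emph{same} witness $a$ and the \emph{same} polynomials $P\in F$ continue to work after the shift.
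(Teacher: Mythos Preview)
Your proposal is correct and is essentially identical to the paper's own proof: the paper also defines the shifted sequences $g_h(t)=h(t+m)$, applies the $J_p$-set near zero definition to $\{g_h:h\in H\}$, and then sets $\beta=m+\gamma$. The only difference is cosmetic notation.
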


\begin{proof}
Let $m\in\mathbb{N}$, $F\in\mathcal{P}_{f}\left(\mathbb{P}\left(S,S\right)\right),H\in P_{f}\left(\mathcal{T}_{0}\right)$
and $\delta>0$, for each $h\in H$ define $g_{h}\in P_{f}\left(\mathcal{T}_{0}\right)$
by $g_{h}\left(t\right)=f\left(t+m\right),t\in\mathbb{N}$. For this
$K=\left\{ g_{h}:h\in H\right\} $, pick $a\in S\cap\left(0,\delta\right)$
and $\gamma\in P_{f}\left(\mathbb{N}\right)$ such that 
\[
a+P\left(\sum_{t\in\gamma}g_{h}\left(t\right)\right)\in A
\]
For each $h\in H$ and all $P\in F$. And let $\beta=m+\gamma$.
\end{proof}
Let $\left(S,+\right)$ be a dense subsemigroup of $\left(\mathbb{R},+\right)$
containing $0$ such that $\left(S\cap\left(0,1\right),\cdot\right)$
is a subsemigroup of $\left(\left(0,1\right),\cdot\right)$.. Also
let $\mathbb{P}$ be the set of all polynomials defined on $\mathbb{R}$.
Let $\left\langle S_{i}\right\rangle _{i=1}^{\infty}$ be a sequence
and $\left\{ S_{\alpha}=\sum_{t\in\alpha}S_{i}:\alpha\in\mathcal{P}_{f}\left(\mathbb{N}\right)\right\} $
an IP-set in $S\cap\left(0,1\right)$ and therefore convergent and
consequently $\lim S_{n}=0$.

There is another way for defining the $J_{p}$-set near zero.
\begin{defn}
Let $l,m\in\mathbb{N}$, and $A\subseteq\mathbb{N}$ be a $J_{p}$-set
near zero. For each $i\in\left\{ 1,2,....,l\right\} $, let $\left(x_{\alpha}^{i}\right)_{\alpha\in\mathcal{P}_{f}(\mathbb{N})}$
be an IP-set in $S\cap\left(0,1\right)$. Then for any finite $F\in\mathcal{P}_{f}\left(\mathbb{P}\left(S,S\right)\right)$,
there exist $a\in S$ and $\beta\in\mathcal{P}_{f}\left(\mathbb{N}\right)$
such that
\[
a+P\left(x_{\beta}^{i}\right)\in A
\]
 for all $i\in\left\{ 1,2,....,l\right\} $ and $P\in F$.
\end{defn}

Let us denote by $\mathcal{J}_{P}^{0}$ the set of all ultrafiltars,
all of whose members are $J_{p}$- set near zero, i.e,

\[
\mathcal{J}_{P}^{0}=\left\{ p\in\beta S_{d}:\text{for all }A\in p,A\text{ is a }J_{p}\text{-set near}\text{ zero }\right\} .
\]

We shall denote $E(\mathcal{J}_{p}^{0})$ the set of all idempotents
in $\mathcal{J}_{p}^{0}$. The following theorem show that $\mathcal{J}_{p}^{0}$
is in fact non empty.
\begin{thm}
Let $\left(S,+\right)$ be a dense subsemigroup of $\left(\mathbb{R},+\right)$
containing $0$ such that $\left(S\cap\left(0,1\right),\cdot\right)$
is a subsemigroup of $\left(\left(0,1\right),\cdot\right)$. Then
$\mathcal{J}_{p}^{0}\neq\emptyset$.
\end{thm}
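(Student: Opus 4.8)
The plan is to exhibit $\mathcal{J}_{p}^{0}$ as the set of ultrafilters all of whose members lie in the family
\[
\mathcal{A}=\left\{ A\subseteq S:A\text{ is a }J_{p}\text{-set near zero}\right\},
\]
and then to use the standard correspondence between partition regular families and closed subsets of $\beta S_{d}$ (see \cite{key-3}). For $p\in\beta S_{d}$ we have $p\in\mathcal{J}_{p}^{0}$ precisely when $p$ contains no set outside $\mathcal{A}$, that is, when $S\setminus B\in p$ for every $B\notin\mathcal{A}$; hence $\mathcal{J}_{p}^{0}=\bigcap\{\overline{S\setminus B}:B\notin\mathcal{A}\}$, an intersection of basic closed subsets of the compact space $\beta S_{d}$. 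Because $\overline{C}\cap\overline{D}=\overline{C\cap D}$ in $\beta S_{d}$, the finite intersection property of this family is equivalent to the assertion that $S$ cannot be covered by finitely many sets none of which is a $J_{p}$-set near zero. Thus, modulo this reduction, it suffices to prove (a) $S\in\mathcal{A}$, (b) $\mathcal{A}$ is closed under supersets, and (c) $\mathcal{A}$ is partition regular; compactness then forces $\mathcal{J}_{p}^{0}\neq\emptyset$.

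Items (a) and (b) are routine. Superset-closure is immediate from Definition \ref{JP set near zero}. For (a), let $F\in\mathcal{P}_{f}(\mathbb{P}(S,S))$, $H\in\mathcal{P}_{f}(\mathcal{T}_{0})$ and $\delta>0$ be given. Since each $f\in H$ converges to zero and $H$ is finite, we may choose $\beta\in\mathcal{P}_{f}(\mathbb{N})$ supported on indices so large that $\sum_{t\in\beta}f(t)\in S\cap(0,1)$ for every $f\in H$; multiplicative closure of $(S\cap(0,1),\cdot)$ together with additive closure of $S$ then gives $P(\sum_{t\in\beta}f(t))\in S\cup\{0\}$ for each $P\in F$. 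Picking $a\in S\cap(0,\delta)$, which exists because $S$ is dense in $\mathbb{R}$, yields $a+P(\sum_{t\in\beta}f(t))\in S$ for all $f\in H$ and $P\in F$, so $S$ is a $J_{p}$-set near zero.

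Item (c), partition regularity, is the heart of the matter and the step I expect to be the main obstacle. Suppose $A\in\mathcal{A}$ and $A=\bigcup_{i=1}^{r}A_{i}$, and assume for contradiction that no $A_{i}$ is a $J_{p}$-set near zero. Then for each $i$ there is a witnessing data set $(F_{i},H_{i},\delta_{i})$ for which no admissible pair $(a,\beta)$ places the relevant polynomial combination into $A_{i}$. I would amalgamate the $H_{i}$ into a single finite family of sequences and, working with the IP-set reformulation stated just before the theorem, use an iterated (tensor/tower) construction of sequences in $S\cap(0,1)$ so that a single witness for the $J_{p}$-set-near-zero property of $A$ can be decoded into a witness for some fixed $A_{i}$ against its own data $(F_{i},H_{i},\delta_{i})$, contradicting the choice of that data; this follows the scheme used for the $\mathbb{N}$-analogue in \cite{key-8}. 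The genuinely new difficulty over the infinite setting is the simultaneous control of the \emph{near-zero} constraints: the amalgamated and index-shifted sequences must again belong to $\mathcal{T}_{0}$, and every intermediate witness must stay inside $S\cap(0,\delta)$. Lemma \ref{min near zero} is the tool that resolves this tension, since it lets one push the supports $\beta$ out to arbitrarily large indices, where the sequence values are as small as the prescribed $\delta$ demands, so that the smallness requirements can be met alongside the combinatorial ones.

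Once (c) is in hand, the correspondence of the first paragraph produces an ultrafilter $p$ every member of which is a $J_{p}$-set near zero, giving $\mathcal{J}_{p}^{0}\neq\emptyset$. The same description writes $\mathcal{J}_{p}^{0}$ as an intersection of closed sets, so it is in fact a closed subset of $\beta S_{d}$; and since the admissible witnesses always lie in $S\cap(0,\delta)$ for every $\delta>0$, one checks that $\mathcal{J}_{p}^{0}$ is contained in $0^{+}$.
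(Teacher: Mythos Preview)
Your overall plan is sound but takes a different route from the paper. The paper does not argue via partition regularity of the family $\mathcal{A}$; instead it shows directly that every piecewise syndetic set near zero is a $J_{p}$-set near zero, using a polynomial Hales--Jewett type argument (encoding the polynomial configurations into a large product space $Q$, invoking the PHJ number $N([q],r,d)$, and reading off a monochromatic combinatorial line as the desired witness). Since every member of any $p\in K(0^{+})$ is piecewise syndetic near zero, this immediately gives $K(0^{+})\subseteq\mathcal{J}_{p}^{0}$ and hence non-emptiness. The paper's route thus yields a stronger conclusion (containment of the minimal ideal), which it in fact needs later.

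Your step~(c) is where the real content hides, and as written it is only a sketch with a misidentified crux. The difficulty is not merely the control of near-zero constraints: given a witness $(a,\beta)$ for $A=\bigcup_{i} A_{i}$, the various elements $a+P\bigl(\sum_{t\in\beta}f(t)\bigr)$ for $P\in F$, $f\in H$ may fall into \emph{different} cells $A_{i}$, and forcing them all into a single cell is exactly what demands a Hales--Jewett/PHJ argument. Lemma~\ref{min near zero} only lets you push $\min\beta$ beyond a prescribed bound; it does not supply this Ramsey-theoretic step. So the combinatorial input you need for (c) is essentially the same PHJ machinery the paper invokes, just repackaged. If you carry that out your compactness argument goes through, but you will have done no less work than the paper and will still not have obtained $K(0^{+})\subseteq\mathcal{J}_{p}^{0}$.
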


\begin{proof}
Let $F$ be any given system of finite polynomials each of which vanishes
at $0$ and let $\left\{ \left\langle S_{i}^{j}\right\rangle _{i}^{\infty}:j=1,2,\ldots,l\right\} $
such that be a collection of sequences converging to $0$.

Let $\left[q\right]$ be the set of coefficients of the polynomials
in $F$. Let $\left\{ \left[q_{j}\right]:j=1,2,\ldots,l\right\} $
be a sequence of finite sets such that $\left[q\right]=\cup_{j=1}^{l}\left[q_{j}\right]$.

Let $N=N\left(\left[q\right],r,d\right)$ be the P.H.J number and
let 

\[
Q=\left[q\right]^{N}\times\left[q\right]^{N\times N}\times\ldots\times\left[q\right]^{N^{d}}.
\]

There exist $a\in Q$ and $\gamma\in\mathcal{P}_{f}\left(\mathbb{N}\right)$
such that 
\[
\left\{ a\oplus x_{1}\gamma\oplus x_{2}\left(\gamma\times\gamma\right)\oplus\ldots\oplus x_{d}\gamma^{d}:1\leq x_{i}\leq q\right\} 
\]
 is monocromatic \cite[Theorem 4]{key-4}.

Let us define a set 
\[
B=\left\{ \begin{array}{c}
\sum_{i=1}^{N}a_{i}S_{i}^{j}+\sum_{i_{1},i_{2}=1}^{N}a_{i_{1}}a_{i_{2}}S_{i_{1}}^{j}S_{i_{2}}^{j}+\ldots+\sum_{i_{1},\ldots,i_{d}=1}^{N}a_{i_{1}}\ldots a_{i_{d}}S_{i_{1}}^{j}\ldots S_{i_{d}}^{j}:\\
a=\left\langle a_{i}\right\rangle _{i=1}^{N},\left\langle a_{i}a_{j}\right\rangle _{i,j=1,1}^{N.N},\ldots,\left\langle a_{i_{1}}\ldots a_{i_{d}}\right\rangle _{i_{1},\ldots,i_{d}=1,\ldots,1}^{N,\ldots,N}\in Q
\end{array}\right\} 
\]
 where $j\in\left\{ 1,2,\ldots,l\right\} $.

Now define a map $C:Q\to S$ by 
\[
\begin{array}{c}
C\left(\left\langle a_{i}\right\rangle _{i=1}^{N},\left\langle a_{i}a_{j}\right\rangle _{i,j=1,1}^{N.N},\ldots,\left\langle a_{i_{1}}\ldots a_{i_{d}}\right\rangle _{i_{1},\ldots,i_{d}=1,\ldots,1}^{N,\ldots,N}\right)=\\
\sum_{i=1}^{N}a_{i}S_{i}^{j}+\sum_{i_{1},i_{2}=1}^{N}a_{i_{1}}a_{i_{2}}S_{i_{1}}^{j}S_{i_{2}}^{j}+\ldots+\sum_{i_{1},\ldots,i_{d}=1}^{N}a_{i_{1}}\ldots a_{i_{d}}S_{i_{1}}^{j}\ldots S_{i_{d}}^{j}
\end{array}.
\]

where $j\in\left\{ 1,2,\ldots,l\right\} $.

Now 
\[
\begin{array}{c}
C\left(a\oplus x_{1}\gamma\oplus x_{2}\left(\gamma\times\gamma\right)\oplus\ldots\oplus x_{d}\gamma^{d}\right)=b+x_{1}\sum_{t\in\gamma}S_{t}^{j}+x_{2}\left(\sum_{t\in\gamma}S_{t}^{j}\right)^{2}+\ldots+x_{d}\left(\sum_{t\in\gamma}S_{t}^{j}\right)^{d}\\
=b+P\left(\sum_{t\in\gamma}S_{t}^{j}\right)
\end{array}
\]
for all $j\in\left\{ 1,2,\ldots,l\right\} $. Then the following cases
appear 
\begin{enumerate}
\item $x_{1},x_{2},\ldots,x_{d}\in\left[q_{1}\right]$.
\item $x_{1},x_{2},\ldots,x_{d}\in\left[q_{2}\right]$.
\item $\ldots$
\item $x_{1},x_{2},\ldots,x_{d}\in\left[q_{l}\right]$.
\end{enumerate}
For each of the cases, we have $\left\{ b+P\left(\sum_{t\in\gamma}S_{t}^{j}\right):j\in\left\{ 1,2,\ldots,l\right\} ,P\in F\right\} $
is monocromatic.

Then by definition of piecewise syndetic set near zero \cite[Definition 3.4]{key-2-1}
and \cite[Theorem 7]{key-4}, we have if $A$ be an piecewise syndetic
set near zero then $b+P\left(\sum_{t\in\gamma}S_{t}^{j}\right)=b+P\left(S_{\gamma}^{j}\right)\in A$,
for all $j\in\left\{ 1,2,\ldots,l\right\} ,P\in F$.

Since $\overline{A}\cap K\left(0^{+}\left(S\right)\right)\neq\emptyset$,
then there exists $p\in\overline{A}\cap K\left(0^{+}\left(S\right)\right)$
so $p\in\mathcal{J}_{p}^{0}$. So we get our desire result.
\end{proof}
We know that 
\[
J\left(S\right)=\left\{ p\in\beta S:\forall A\in p,A\text{ is a J-set}\right\} 
\]
 is a two sided ideal of $\beta S$.

In \cite[Thorem 3.9]{key-3} Bayatmanesh, Tootkaboni proved that 
\[
J_{0}\left(S\right)=\left\{ p\in\beta S:\forall A\in p,A\text{ is a J-set near zero}\right\} 
\]
 is also a two sided ideal of $0^{+}$.

After defining $J_{p}$-set, Goswami, Baglini and Patra demonstated
that 
\[
\mathcal{J}_{P}=\left\{ p\in\beta\mathbb{N}:\text{for all }A\in p,A\text{ is a }J_{p}\text{-set}\right\} 
\]
 is also a two sided ideal of $\left(\beta\mathbb{N},+\right)$\cite[Thorem 8]{key-8}.

So we expect that $\mathcal{J}_{p}^{0}$ also a two sided ideal of
$0^{+}$. The proof is in the following.
\begin{thm}
\label{Ideal } Let $\left(S,+\right)$ be a dense subsemigroup of
$\left(\mathbb{R},+\right)$ containing $0$ such that $\left(S\cap\left(0,1\right),\cdot\right)$
is a subsemigroup of $\left(\left(0,1\right),\cdot\right)$. Then
$\mathcal{J}_{p}^{0}$ is a two sided ideal of $0^{+}$.
\end{thm}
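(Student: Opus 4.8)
The plan is to verify the three things needed for $\mathcal{J}_p^0$ to be a two-sided ideal of $(0^+,+)$: that $\mathcal{J}_p^0\subseteq 0^+$, that it absorbs addition on the left, and that it absorbs addition on the right; nonemptiness is already in hand. Throughout I would use that $A\in p+q$ iff $\{x\in S:-x+A\in q\}\in p$, where $-x+A=\{y\in S:x+y\in A\}$, together with the characterization that $p\in 0^+$ exactly when $S\cap(0,\epsilon)\in p$ for every $\epsilon>0$.

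First I would prove the containment $\mathcal{J}_p^0\subseteq 0^+$. Fix $p\in\mathcal{J}_p^0$ and $\epsilon>0$; it suffices to show that $S\cap[\epsilon,\infty)$ is \emph{not} a $J_p$-set near zero, for then it cannot lie in $p$ and so $S\cap(0,\epsilon)\in p$. The point is that we get to pick the challenge data $(F,H,\delta)$. Choosing $\delta=\epsilon/2$, the single polynomial $P(x)=x\in\mathbb{P}(S,S)$, and one sequence $f\in\mathcal{T}_0$ with positive terms in $S$ and small total sum $\sum_t f(t)<\epsilon/2$ (possible by picking $f(t)\in S\cap(0,\epsilon/2^{t+2})$, since $S$ meets every interval about $0$), every candidate value satisfies $0<a+P(\sum_{t\in\beta}f(t))=a+\sum_{t\in\beta}f(t)<\epsilon$ for all $a\in S\cap(0,\delta)$ and all finite $\beta$. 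Hence no witness can land in $[\epsilon,\infty)$, which gives the containment.

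Next I would establish left-absorption $0^++\mathcal{J}_p^0\subseteq\mathcal{J}_p^0$. Let $p\in 0^+$, $q\in\mathcal{J}_p^0$, and take $A\in p+q$; I must show $A$ is a $J_p$-set near zero. Given $(F,H,\delta)$, set $B=\{x\in S:-x+A\in q\}\in p$. As $p\in 0^+$, the set $B\cap S\cap(0,\delta/2)$ is in $p$, hence nonempty; pick $x$ in it. Then $-x+A\in q$, and since $q\in\mathcal{J}_p^0$ the set $-x+A$ is a $J_p$-set near zero, so applying its defining property to $(F,H,\delta/2)$ yields $a'\in S\cap(0,\delta/2)$ and $\beta\in\mathcal{P}_f(\mathbb{N})$ with $a'+P(\sum_{t\in\beta}f(t))\in -x+A$ for all $f\in H$, $P\in F$. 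Setting $a=x+a'\in S\cap(0,\delta)$ gives $a+P(\sum_{t\in\beta}f(t))\in A$ simultaneously for all $f,P$, so $A$ is a $J_p$-set near zero.

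Finally, right-absorption $\mathcal{J}_p^0+0^+\subseteq\mathcal{J}_p^0$, which I expect to be the main obstacle. Let $p\in\mathcal{J}_p^0$, $q\in 0^+$, $A\in p+q$, and fix $(F,H,\delta)$. Now $B=\{x\in S:-x+A\in q\}\in p$ is a $J_p$-set near zero, so there are $b\in S\cap(0,\delta/2)$ and $\beta$ with $w_{f,P}:=b+P(\sum_{t\in\beta}f(t))\in B$ for every $f\in H$, $P\in F$. These are finitely many elements of $B$, so $D=\bigcap_{f\in H,\,P\in F}(-w_{f,P}+A)\in q$, and since $q\in 0^+$ I can pick $c\in D\cap S\cap(0,\delta/2)$. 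Putting $a=b+c\in S\cap(0,\delta)$, the relation $c\in -w_{f,P}+A$ reads $c+w_{f,P}\in A$, that is $a+P(\sum_{t\in\beta}f(t))\in A$ for all $f,P$ at once, so $A$ is a $J_p$-set near zero. The delicate points are that the single pair $(b,\beta)$ must serve all $(f,P)$, that the finite intersection $D$ stays in $q$, and — crucially — that the extra shift by $c$ leaves the polynomial argument $\sum_{t\in\beta}f(t)$ untouched, so that $a+P(\sum_{t\in\beta}f(t))=c+w_{f,P}$; the budgets $\delta/2+\delta/2=\delta$ then keep $a$ below $\delta$. Combining the three parts shows $\mathcal{J}_p^0$ is a two-sided ideal of $0^+$.
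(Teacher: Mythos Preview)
Your proof is correct and follows essentially the same approach as the paper: for each of $p+q$ and $q+p$ you use exactly the same split into ``$B\in p$ is a $J_p$-set near zero, then take a finite intersection in $q$ and pick a small element'' and ``pick a small $x\in B$ using $q\in 0^+$, then $-x+A\in p$ is a $J_p$-set near zero,'' respectively, with identical $\delta/2+\delta/2$ bookkeeping. The one noteworthy difference is that you also explicitly verify the containment $\mathcal{J}_p^0\subseteq 0^+$ (needed for the phrase ``ideal of $0^+$'' to make sense), which the paper's proof leaves tacit.
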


\begin{proof}
Let $p\in\mathcal{J}_{P}^{0}$ and $q\in0^{+}\left(S\right)$. We
want to show $p+q,q+p\in\mathcal{J}_{P}^{0}$.

To show, $p+q\in\mathcal{J}_{P}^{0}$, let $A\in p+q$, so that $B=\left\{ x\in S:-x+A\in q\right\} \in p$.
Hence $B$ is a $J_{p}$-set near zero. Then for any $F\in\mathcal{P}_{f}\left(\mathbb{P}\left(S,S\right)\right),H\in P_{f}\left(\mathcal{T}_{0}\right)$
and $\delta>0$, there exist $a\in S\cap\left(0,\nicefrac{\delta}{2}\right)$
and $\beta\in P_{f}\left(\mathbb{N}\right)$ such that for each $f\in H$
and all $P\in F$, 
\[
a+P\left(\sum_{t\in\beta}f\left(t\right)\right)\in B.
\]
.

Then 
\[
\bigcap_{f\in F}\left\{ -\left(a+P\left(\sum_{t\in\beta}f\left(t\right)\right)\right)+A\right\} \in q.
\]

Let us choose
\[
y\in\left[\bigcap_{f\in F}\left\{ -\left(a+P\left(\sum_{t\in\beta}f\left(t\right)\right)\right)+A\right\} \right]\cap\left(0,\nicefrac{\delta}{2}\right).
\]

So, for all $f\in F$, and all $P\in F$, 
\[
y+a+P\left(\sum_{t\in\beta}f\left(t\right)\right)\in A,
\]

Let us define, $x=y+a\in S\cap\left(0,\delta\right)$.

Hence for any $F\in\mathcal{P}_{f}\left(\mathbb{P}\left(S,S\right)\right),H\in P_{f}\left(\mathcal{T}_{0}\right)$
and $\delta>0$, there exist $x\in S\cap\left(0,\delta\right)$ and
$\beta\in P_{f}\left(\mathbb{N}\right)$ such that for each $f\in H$
and all $P\in F$, 
\[
x+P\left(\sum_{t\in\beta}f\left(t\right)\right)\in A.
\]

Therefore $A$ is a $J_{p}$-set near zero. Since $A$ is arbitrary
element from $p+q$. So, $p+q\in\mathcal{J}_{P}^{0}$.

Now, If $A\in q+p$, then $B=\left\{ x\in S:-x+A\in p\right\} \in q$.
Choose $x\in B\cap\left(0,\nicefrac{\delta}{2}\right)$, $-x+A\in p$.
So, $-x+A$ is a $J_{p}$-set near zero.

Therefore for any $F\in\mathcal{P}_{f}\left(\mathbb{P}\left(S,S\right)\right),H\in P_{f}\left(\mathcal{T}_{0}\right)$
and $\delta>0$, there exist $a\in S\cap\left(0,\nicefrac{\delta}{2}\right)$
and $\beta\in P_{f}\left(\mathbb{N}\right)$ such that for each $f\in H$
and all $P\in F$, 
\[
a+P\left(\sum_{t\in\beta}f\left(t\right)\right)\in-x+A.
\]

Hence for all $f\in F$, and all $P\in F$, 
\[
x+a+P\left(\sum_{t\in\beta}f\left(t\right)\right)\in A,
\]

Let us define, $z=x+a\in S\cap\left(0,\delta\right)$

So for any $F\in\mathcal{P}_{f}\left(\mathbb{P}\left(S,S\right)\right),H\in P_{f}\left(\mathcal{T}_{0}\right)$
and $\delta>0$, there exist $z\in S\cap\left(0,\delta\right)$ and
$\beta\in P_{f}\left(\mathbb{N}\right)$ such that for each $f\in H$
and all $P\in F$, 
\[
z+P\left(\sum_{t\in\beta}f\left(t\right)\right)\in A.
\]

Therefore $A$ is a $J_{p}$-set near zero. Since $A$ is arbitrary
element from $q+p$. So, $q+p\in\mathcal{J}_{p}^{0}$.
\end{proof}
Since $\mathcal{J}_{P}^{0}$ is a two sided ideal of $0^{+}$. Then
$K\left(0^{+}\right)\subseteq\mathcal{J}_{p}^{0}$.
\begin{lem}
\label{Closed } Let $\left(S,+\right)$ be a dense subsemigroup of
$\left(\mathbb{R},+\right)$ containing $0$ such that $\left(S\cap\left(0,1\right),\cdot\right)$
is a subsemigroup of $\left(\left(0,1\right),\cdot\right)$. Then
$\mathcal{J}_{p}^{0}$ is closed subset of $0^{+}$.
\end{lem}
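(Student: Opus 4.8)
The plan is to prove closedness in the standard Stone-space fashion: I would exhibit the complement of $\mathcal{J}_{p}^{0}$ as a union of basic clopen sets. The only structural ingredient needed is that the sets $\overline{A}=\left\{ q\in\beta S_{d}:A\in q\right\}$ form a clopen basis for the topology of $\beta S_{d}$, so that each $\overline{A}$ is in particular open; I will use nothing specific about $J_{p}$-sets near zero beyond the fact that they constitute some fixed family of subsets of $S$.

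First I would unwind the defining condition and negate the quantifier. By definition $p\in\mathcal{J}_{p}^{0}$ means that \emph{every} $A\in p$ is a $J_{p}$-set near zero, so dually $p\notin\mathcal{J}_{p}^{0}$ means that $p$ has \emph{some} member that fails to be a $J_{p}$-set near zero. Writing $\mathcal{B}$ for the collection of all subsets of $S$ that are not $J_{p}$-sets near zero, this rewrites the complement as
\[
\beta S_{d}\setminus\mathcal{J}_{p}^{0}=\bigcup_{B\in\mathcal{B}}\overline{B}.
\]
Each $\overline{B}$ is open, so the right-hand side is open and $\mathcal{J}_{p}^{0}$ is closed in $\beta S_{d}$. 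As shown above, $\mathcal{J}_{p}^{0}$ is a two-sided ideal of $0^{+}$, so in particular $\mathcal{J}_{p}^{0}\subseteq 0^{+}$; since $0^{+}$ is itself closed in $\beta S_{d}$, it follows that $\mathcal{J}_{p}^{0}$ is a closed subset of $0^{+}$. Equivalently, and perhaps more transparently, I could argue pointwise: given $p\in 0^{+}\setminus\mathcal{J}_{p}^{0}$, I would select a witness $A\in p$ that is not a $J_{p}$-set near zero; then $\overline{A}\cap 0^{+}$ is an open neighbourhood of $p$ in $0^{+}$, and every $q$ lying in it has $A$ as a member, forcing $q\notin\mathcal{J}_{p}^{0}$, so the complement is open.

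I do not expect a genuine obstacle here: the statement is a soft consequence of the general principle that any ``for all members'' property cuts out a closed set in a Stone space, the point being that the bad configurations are captured by a single basic open set $\overline{A}$. The only places meriting care are to keep the argument inside the subspace $0^{+}$ (using the already-established containment $\mathcal{J}_{p}^{0}\subseteq 0^{+}$) and to invoke the \emph{clopenness}, hence openness, of the basic sets $\overline{A}$ rather than any feature peculiar to $J_{p}$-sets near zero.
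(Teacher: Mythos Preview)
Your argument is correct and is essentially the same as the paper's: the paper also picks $p\in 0^{+}\setminus\mathcal{J}_{p}^{0}$, chooses $A\in p$ that is not a $J_{p}$-set near zero, and observes that $\overline{A}$ is a basic open neighbourhood of $p$ disjoint from $\mathcal{J}_{p}^{0}$. Your write-up is slightly more detailed (e.g.\ making explicit the passage to the subspace $0^{+}$ and the clopenness of $\overline{A}$), but the underlying idea is identical.
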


\begin{proof}
Let $p\in0^{+}\setminus\mathcal{J}_{P}^{0}$, then there exists $A\in p$
such that $A$ is not a $J_{p}$-set. Then $\bar{A}\cap\mathcal{J}_{p}^{0}=\emptyset$
and $p\in\overline{A}$. So $p$ is not a limit point of $\mathcal{J}_{p}^{0}$.
So the proof is done.

Where $\overline{A}=\left\{ p\in\beta S:A\in p\right\} $.
\end{proof}
By Ellis\textquoteright{} theorem \cite[Corollary 2.39]{key-12},
a straightforward consequence of Theorem \ref{Ideal } and lemma \ref{Closed }
is that there are idempotent ultrafilters, and even minimal idempotent
ultrafilters, in $\mathcal{J}_{p}^{0}$ . We denote the set of all
idempotents in $\mathcal{J}_{p}^{0}$ by $E\left(\mathcal{J}_{p}^{0}\right)$.
\begin{defn}
$A\subseteq S$ is said to be a $C_{p}^{0}$ set in $S$ if $A\in p$
for some ultrafilter $p\in E\left(\mathcal{J}_{p}^{0}\right)$.
\end{defn}

The central sets theorem near zero originally proved by Hindman and
Leader in \cite{key-9}. Recently the Central Set Theorem was extended
by Goswami, Baglini and Patra for polynomials in \cite{key-8}.

In \cite{key-8} the authors introduce $C_{p}$-set by defining that,
$C_{p}$-set is the member of the ultrafilters in $E\left(\mathcal{J}_{p}\right)=\left\{ p\in\mathcal{J}_{p}:p\text{ is an idempotent ultrafilter}\right\} $.
\begin{thm}
Let $A$ be a $C_{p}$-set and let $F\in\mathcal{P}_{f}\left(\mathbb{P}\right)$.
There exist functions $\alpha:\mathcal{P}_{f}\left(\mathbb{N}^{\mathbb{N}}\right)\to S\text{ and }H:\mathcal{P}_{f}\left(\mathbb{N}^{\mathbb{N}}\right)\to\mathcal{P}_{f}\left(\mathbb{N}\right)$
such that 
\begin{enumerate}
\item If $G,K\in\mathcal{P}_{f}\left(\mathbb{N}^{\mathbb{N}}\right)$ and
$G\subsetneq K$ then $\max H\left(G\right)<\min H\left(K\right)$
and 
\item If $n\in\mathbb{N},G_{1},G_{2},....,G_{m}\in\mathcal{P}_{f}\left(\mathbb{N}^{\mathbb{N}}\right)$;$G_{1}\subsetneq G_{2}\subsetneq....\subsetneq G_{m}$;
and for each $i\in\left\{ 1,2,....,m\right\} ,f_{i}\in G_{i},$ then
for all $P\in F$,
\[
\sum_{i=1}^{n}\alpha\left(G_{i}\right)+P\left(\sum_{i=1}^{n}\sum_{t\in H\left(G_{i}\right)}f_{i}\left(t\right)\right)\in A.
\]
\end{enumerate}
\end{thm}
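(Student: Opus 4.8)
The plan is to run the De--Hindman--Strauss recursion over $\mathcal{P}_{f}\left(\mathbb{N}^{\mathbb{N}}\right)$, ordered by cardinality, but to power each step with the polynomial engine of Theorem \ref{min} in place of the bare $J$-set extraction. Since $A$ is a $C_{p}$-set, I would fix an idempotent $p\in E\left(\mathcal{J}_{p}\right)$ with $A\in p$ and set $A^{\star}=\left\{ x\in A:-x+A\in p\right\} $. I will use the standard idempotent facts that $A^{\star}\in p$ and that $x\in A^{\star}$ implies $-x+A^{\star}\in p$. Because $p\in\mathcal{J}_{p}$, every member of $p$ is a $J_{p}$-set; this is exactly what lets the engine fire at each stage.

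Next I would set up the induction on $\left|G\right|$, maintaining the invariant that for every chain $G_{1}\subsetneq\cdots\subsetneq G_{k}=G$, every choice $f_{i}\in G_{i}$, and every $P\in F$, the sum $\sum_{i=1}^{k}\alpha\left(G_{i}\right)+P\left(\sum_{i=1}^{k}\sum_{t\in H\left(G_{i}\right)}f_{i}\left(t\right)\right)$ lies in $A^{\star}$ (not merely in $A$). Since cardinalities strictly increase along a subset-chain, all proper subsets of $G$ are processed before $G$, so every predecessor datum is available at stage $G$. Conclusion (1) is secured by demanding $\min H\left(G\right)>m_{0}:=\max\bigcup\left\{ H\left(G'\right):\emptyset\neq G'\subsetneq G\right\} $, and conclusion (2) is just the invariant read off at the top of a chain.

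The crux is the extension step. At stage $G$, enumerate the finitely many pairs $\left(S_{0},w\right)$ coming from proper predecessor chains and choices, where $S_{0}=\sum_{i<k}\alpha\left(G_{i}\right)$ is the partial shift-sum and $w=\sum_{i<k}\sum_{t\in H\left(G_{i}\right)}f_{i}\left(t\right)$ the partial inner sum. By the invariant each partial sum $S_{0}+P\left(w\right)$ lies in $A^{\star}$, so $B:=A^{\star}\cap\bigcap_{\left(S_{0},w\right),\,P\in F}\bigl(-\left(S_{0}+P\left(w\right)\right)+A^{\star}\bigr)$ is a finite intersection of members of $p$, hence a member of $p$ and thus a $J_{p}$-set. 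I then apply the shift trick: for each $P\in F$ and each admissible $w$, the polynomial $\widetilde{P}_{w}\left(z\right):=P\left(w+z\right)-P\left(w\right)$ has zero constant term, and for $w\in\mathbb{N}\cup\left\{ 0\right\} $ its coefficient of $z^{j}$, namely $\sum_{s\geq j}c_{s}\binom{s}{j}w^{s-j}$, is a nonnegative integer, so the finite family $\mathcal{Q}=\left\{ \widetilde{P}_{w}:P\in F,\,w\in W\right\} $ lies in $\mathcal{P}_{f}\left(\mathbb{P}\right)$. Feeding $B$, the family $\mathcal{Q}$, the bound $m_{0}$, and the finitely many IP-sets $\left(x_{\alpha}^{f}\right)_{\alpha}$ with $x_{\alpha}^{f}=\sum_{t\in\alpha}f\left(t\right)$ ($f\in G$) into Theorem \ref{min} yields $a$ and $\beta$ with $\min\beta>m_{0}$ and $a+\widetilde{P}_{w}\left(x_{\beta}^{f}\right)\in B$ for all $f\in G$, $P\in F$, $w\in W$. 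Setting $\alpha\left(G\right)=a$ and $H\left(G\right)=\beta$ and adding $S_{0}+P\left(w\right)\in A^{\star}$ back in gives $S_{0}+a+P\left(w+x_{\beta}^{f}\right)=\sum_{i=1}^{k}\alpha\left(G_{i}\right)+P\left(\sum_{i=1}^{k}\sum_{t\in H\left(G_{i}\right)}f_{i}\left(t\right)\right)\in A^{\star}$, which is the invariant extended to $G$ (the length-one case $S_{0}=w=0$, $\widetilde{P}_{0}=P$, being included via the context $w=0$).

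The main obstacle I anticipate is the bookkeeping of the shift, specifically verifying that $B\in p$ and that $\mathcal{Q}$ is an admissible polynomial family: one must check $-\left(S_{0}+P\left(w\right)\right)+A^{\star}\in p$ (which is precisely where the invariant $S_{0}+P\left(w\right)\in A^{\star}$ together with the idempotent identity $x\in A^{\star}\Rightarrow-x+A^{\star}\in p$ are used), and that the shifted polynomials retain coefficients in $\mathbb{N}\cup\left\{ 0\right\} $ so that Theorem \ref{min} applies verbatim in the ambient where its $J_{p}$-machinery lives. Everything else---the strict increase of cardinalities along chains, the min/max ordering of the blocks $H\left(G\right)$, and reading conclusion (2) off the invariant at the top of an arbitrary chain---is routine once this extension step is in place.
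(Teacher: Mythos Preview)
Your proposal is correct and follows precisely the standard $A^{\star}$-induction with shifted polynomials $Q_{P,d}(y)=P(y+d)-P(d)$ and an invocation of Theorem~\ref{min} that the paper employs for its near-zero analogue (the proof of Theorem~\ref{Main}) and that the cited source \cite[Theorem 11]{key-8} uses for this very statement. The paper itself does not reprove the result here but merely cites \cite{key-8}; your argument is essentially that proof.
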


\begin{proof}
\cite[Theorem 11]{key-8}.
\end{proof}
Following them we proved the Stronger Polynomial Central sets theorem
near zero$.$ Before that let's recall some notions.
\begin{defn}
Let $\left(S,+\right)$ be a dense subsemigroup of $\left(\mathbb{R},+\right)$
containing $0$ such that $\left(S\cap\left(0,1\right),\cdot\right)$is
a subsemigroup of $\left(\left(0,1\right),\cdot\right)$.
\begin{enumerate}
\item $\mathcal{J}_{p}^{0}=\left\{ p\in\beta S_{d}:\text{for all }A\in p,A\text{ is a }J_{p}\text{ set near }\text{zero }\right\} $.
\item $E\left(\mathcal{J}_{p}^{0}\right)=\left\{ p\in\mathcal{J}_{p}^{0}:p\text{ is an idempotent ultrafilter}\right\} $.
\item A set $A\subseteq S$ is called $C_{p}^{0}$ set if $A\in p\in E\left(\mathcal{J}_{p}^{0}\right).$
\end{enumerate}
\end{defn}

We are now ready to prove our main result, Theorem \ref{Main}. Since
$K\left(0^{+}\right)\subseteq\mathcal{J}_{p}^{0}$, every Central
set near zero is also a $C_{p}^{0}$ set.
\begin{thm}
Let $\left(S,+\right)$ be a dense subsemigroup of $\left(\mathbb{R},+\right)$
containing $0$ such that $\left(S\cap\left(0,1\right),\cdot\right)$is
a subsemigroup of $\left(\left(0,1\right),\cdot\right)$. Let $A$
be a $C_{p}^{0}$ set in $S$ in particular central set near zero,
and $L\in\mathcal{P}_{f}\left(\mathbb{P}\left(S,S\right)\right)$.
Then for each $\delta\in\left(0,1\right)$, there exist functions
$\alpha_{\delta}:\mathcal{P}_{f}\left(\mathcal{T}_{0}\right)\to S$
and $H_{\delta}:\mathcal{P}_{f}\left(\mathcal{T}_{0}\right)\to\mathcal{P}_{f}\left(\mathbb{N}\right)$
such that 
\begin{enumerate}
\item \label{1}$\alpha_{\delta}\left(F\right)<\delta$ for each $F\in\mathcal{P}_{f}\left(\mathcal{T}_{0}\right)$,
\item \label{2}if $F,G\in\mathcal{P}_{f}\left(\mathcal{T}_{0}\right)$
and $F\subset G$, then $\max H_{\delta}\left(F\right)<\min H_{\delta}\left(G\right)$
and
\item \label{3}If $n\in\mathbb{N}$ and $G_{1},G_{2},...,G_{n}\in\mathcal{P}_{f}\left(\mathcal{T}_{0}\right),\,G_{1}\subset G_{2}\subset.....\subset G_{n}$
and $f_{i}\in G_{i},i=1,2,...,n.$ then
\[
\sum_{i=1}^{n}\alpha_{\delta}\left(G_{i}\right)+P\left(\sum_{i=1}^{n}\sum_{t\in H_{\delta}\left(G_{i}\right)}f_{i}\left(t\right)\right)\in A.
\]
 for all $P\in L$.
\end{enumerate}
\end{thm}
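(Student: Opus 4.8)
The plan is to follow the ultrafilter recursion used by Goswami, Baglini and Patra for the polynomial central sets theorem, replacing their minimal $J_p$-set lemma by its near-zero analogue, Lemma \ref{min near zero}, and carrying along the extra constraint that every value of $\alpha_\delta$ stays inside $(0,\delta)$. First I would fix an idempotent ultrafilter $p \in E(\mathcal{J}_p^0)$ with $A \in p$; this is legitimate because $A$ is a $C_p^0$ set, and since $K(0^+) \subseteq \mathcal{J}_p^0$ the argument simultaneously covers the case where $A$ is merely central near zero. Passing to $A^\star = \{x \in A : -x + A \in p\}$, I would record the two standard idempotent facts that $A^\star \in p$ and that $-x + A^\star \in p$ for every $x \in A^\star$, together with the observation that, since $p \in \mathcal{J}_p^0$, every member of $p$ (in particular $A^\star$ and every finite intersection of translates of it that still lies in $p$) is a $J_p$-set near zero.

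The construction of $\alpha_\delta$ and $H_\delta$ would proceed by recursion on $|F|$ for $F \in \mathcal{P}_f(\mathcal{T}_0)$, maintaining the inductive invariant that for every chain $G_1 \subsetneq \cdots \subsetneq G_n = F$, every choice $f_i \in G_i$, and every $P \in L$ the full sum $\sum_{i=1}^n \alpha_\delta(G_i) + P(\sum_{i=1}^n \sum_{t \in H_\delta(G_i)} f_i(t))$ lies in $A^\star$, along with the block-ordering requirement (\ref{2}). For the base case $|F| = 1$ I would apply Lemma \ref{min near zero} to the $J_p$-set near zero $A^\star$, the family $L$, the finite set $F$ of sequences, and the given $\delta$, demanding $\min H_\delta(F)$ larger than any previously used index; this returns $\alpha_\delta(F) \in S \cap (0,\delta)$ and $H_\delta(F) \in \mathcal{P}_f(\mathbb{N})$ with $\alpha_\delta(F) + P(\sum_{t\in H_\delta(F)} f(t)) \in A^\star$ for all $f \in F$ and $P \in L$, which also secures (\ref{1}).

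For the inductive step at $F$ with $|F| > 1$, I would collect the finitely many partial sums $c = \sum_{i=1}^{n-1}\alpha_\delta(G_i) + P(\sum_{i=1}^{n-1}\sum_{t\in H_\delta(G_i)}f_i(t))$ arising from all proper sub-chains ending below $F$, all admissible choices of the $f_i$, and all $P \in L$; each such $c$ lies in $A^\star$ by the invariant, so $B = A^\star \cap \bigcap_c (-c + A^\star)$ belongs to $p$ and is therefore a $J_p$-set near zero. I would then feed $B$, the family $L$, the sequences of $F$, and $\delta$ into Lemma \ref{min near zero}, insisting that $\min H_\delta(F)$ exceed $\max H_\delta(G)$ for every proper subset $G$ already treated, thereby obtaining $\alpha_\delta(F) \in S \cap (0,\delta)$ and $H_\delta(F)$ and re-establishing (\ref{1}) and (\ref{2}). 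Verifying the invariant then splits into the length-one chain (immediate from $B \subseteq A^\star$) and the longer chains, where the newly appended block must combine with each admissible prefix to keep the polynomial value in $A^\star$.

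This last verification is exactly where I expect the real difficulty to sit. In the linear central sets theorem one simply uses $-c + A^\star \in p$ and adds the new block, but here the polynomial $P$ is applied to the \emph{enlarged} inner sum $w + v$, where $w$ is the prefix inner sum and $v$ the contribution of the new block, and $P(w+v)$ does not decompose as $P(w)$ plus a term free of $w$. The heart of the argument is thus to arrange the recursion so that the enlarged inner sum is itself presented to Lemma \ref{min near zero} as the IP-set quantity to which $P$ is applied, so that the cross terms produced by $P(w+v)$ are absorbed by the lemma rather than fought by hand, and to check that the coefficient and zero-constant-term hypotheses on the polynomials survive this manipulation. Once the nonlinearity is controlled, properties (\ref{1}) and (\ref{2}) come for free: (\ref{1}) because Lemma \ref{min near zero} always returns $\alpha_\delta(F) \in S \cap (0,\delta)$, and (\ref{2}) because at each stage $\min H_\delta(F)$ is chosen beyond all earlier blocks.
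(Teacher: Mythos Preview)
Your skeleton is exactly the one the paper uses: pick an idempotent $p\in E(\mathcal J_p^{0})$ with $A\in p$, pass to $A^{\star}$, induct on $|F|$, and at the inductive step intersect $A^{\star}$ with the finitely many translates $-c+A^{\star}$ coming from all proper sub-chains to obtain a set $B\in p$ which is again a $J_p$-set near zero. Up to here your plan matches the paper line for line.

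The gap is precisely at the point you flag as ``the real difficulty''. You propose to apply Lemma~\ref{min near zero} to $B$ with the \emph{original} family $L$ and the sequences in $F$, and to ``arrange the recursion so that the enlarged inner sum $w+v$ is itself presented to the lemma as the IP-set quantity''. That does not work: the lemma returns a single $\beta$ and the value $\sum_{t\in\beta}f(t)$; there is no way to force a fixed constant $w$ (which moreover varies over all prefix inner sums simultaneously) into that IP-sum while keeping the sequences in $\mathcal T_0$ and keeping one common $\beta$ for every prefix at once. Consequently, from $a+P(v)\in B\subseteq -c+A^{\star}$ you only obtain $\sum_{i=1}^{n}\alpha_\delta(G_i)+P(w)+P(v)\in A^{\star}$, not the required $\sum_{i=1}^{n}\alpha_\delta(G_i)+P(w+v)\in A^{\star}$.

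The missing idea, which the paper implements, is to shift the translation by $w$ into the \emph{polynomial} family rather than into the sequences. For every prefix inner sum $d$ in the finite set
\[
R=\Bigl\{\textstyle\sum_{i=1}^{n}\sum_{t\in H_\delta(G_i)}f_i(t):\ \emptyset\neq G_1\subsetneq\cdots\subsetneq G_n\subsetneq F,\ f_i\in G_i\Bigr\}
\]
and every $P\in L$, set $Q_{P,d}(y)=P(y+d)-P(d)$. Because the coefficients of $P$ lie in $\mathbb N\cup\{0\}$, each $Q_{P,d}$ again has zero constant term and nonnegative integer coefficients, so $Q_{P,d}\in\mathbb P(S,S)$; this is exactly the ``coefficient and zero-constant-term'' check you anticipated. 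One then applies Lemma~\ref{min near zero} to $B$ with the enlarged finite family $M=L\cup\{Q_{P,d}:P\in L,\ d\in R\}$ and with the sequences $F$, obtaining $a\in S\cap(0,\delta)$ and $\gamma$ with $\min\gamma>m$ such that $a+Q(\sum_{t\in\gamma}f(t))\in B$ for all $Q\in M$, $f\in F$. For a chain of length $n>1$ with prefix data $(c,d)$ one now reads off
\[
a+Q_{P,d}\Bigl(\textstyle\sum_{t\in\gamma}f_n(t)\Bigr)\in B\subseteq -c+A^{\star},
\]
and unwinding $Q_{P,d}$ yields $\sum_{i=1}^{n}\alpha_\delta(G_i)+P\bigl(\sum_{i=1}^{n}\sum_{t\in H_\delta(G_i)}f_i(t)\bigr)\in A^{\star}$, which closes the induction. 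With this single amendment your outline becomes the paper's proof.
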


\begin{proof}
Choose an idempotent $p\in\mathcal{J}_{p}^{0}$ with $A\in p$. For
$\delta>0$ and $F\in\mathcal{P}_{f}\left(\mathcal{T}_{0}\right)$,
we define $\alpha_{\delta}\left(F\right)\in S$ and $H_{\delta}\left(F\right)\in\mathcal{P}_{f}\left(\mathbb{N}\right)$
witnessing $\left(\ref{1},\ref{2},\ref{3}\right)$ by induction on
$\mid F\mid$.

For the base case of induction, let $F=\left\{ f\right\} $. As $p$
is idempotent, the set $A^{*}=\left\{ x\in A:-x+A\in p\right\} $
belongs to $p$, hence it is a $J_{p}$- set near zero. So for $\delta>0$
there exist $H\in\mathcal{P}_{f}\left(\mathbb{N}\right)$ and $a\in S\cap\left(0,\delta\right)$
such that 

\[
\forall P\in L,\ a+P\left(\sum_{t\in H}f\left(t\right)\right)\in A^{*}.
\]

By setting $\alpha_{\delta}\left(\left\{ f\right\} \right)=a$ and
$H_{\delta}\left(\left\{ f\right\} \right)=H$, conditions $\left(\ref{1},\ref{2},\ref{3}\right)$
are satisfied.

Now assume that $\mid F\mid>1$ and $\alpha_{\delta}\left(G\right)$
and $H_{\delta}\left(G\right)$ have been defined for all proper subsets
$G$ of $F$. Let $K_{\delta}=\bigcup\left\{ H_{\delta}\left(G\right):\emptyset\neq G\subset F\right\} \in\mathcal{P}_{f}\left(\mathbb{N}\right)$,
$m=\max K_{\delta}$ and

Let 
\[
R=\left\{ \begin{array}{c}
\sum_{i=1}^{n}\sum_{t\in H_{\delta}\left(G_{i}\right)}f_{i}\left(t\right)\mid n\in\mathbb{N},\\
\emptyset\neq G_{1}\subsetneq G_{2}\subsetneq\cdots\subsetneq G_{n}\subset F,\\
f_{i}\in G_{i},\forall i=1,2,..,n.
\end{array}\right\} 
\]

\[
M_{\delta}=\left\{ \begin{array}{c}
\sum_{i=1}^{n}\alpha_{\delta}\left(G_{i}\right)+P\left(\sum_{i=1}^{n}\sum_{t\in H_{\delta}\left(G_{i}\right)}f_{i}\left(t\right)\right)\mid n\in\mathbb{N},\\
\emptyset\neq G_{1}\subsetneq G_{2}\subsetneq\cdots\subsetneq G_{n}\subset F,\\
f_{i}\in G_{i},\forall i=1,2,..,n,\,P\in L.
\end{array}\right\} 
\]

Then $R\subseteq S$ and $M_{\delta}$ is finite and by hypothesis
(\ref{3}), $M_{\delta}\subseteq A^{*}$.

Let 
\[
B=A^{*}\cap\left(\bigcap_{x\in M_{\delta}}\left(-x+A^{*}\right)\right)\in p.
\]

For $P\in L$ and $d\in R$, let us define the polynomial 

\[
Q_{P,d}\left(y\right)=P\left(y+d\right)-P\left(d\right)
\]

Since the coefficients of $P$ come from $\mathbb{N}\cup\left\{ 0\right\} $,
$Q_{P,d}\in\mathbb{P}\left(S,S\right)$.

Let $M=L\cup\left\{ Q_{P,d}\mid P\in L\text{ and }d\in R\right\} $.

From Lemma $\ref{min near zero}$, there exists $\gamma\in\mathcal{P}_{f}\left(\mathbb{N}\right)$
with $\min\left(\gamma\right)>m$ and $a\in S\cap\left(0,\delta\right)$
such that 

\[
\forall Q\in M,f\in F\ a+Q\left(f\left(\gamma\right)\right)\in B.
\]
 We set $\alpha_{\delta}\left(F\right)=a<\delta$ and $H_{\delta}\left(F\right)=\gamma$.
So (\ref{1}) is satisfies immediately. Now we are left to verify
conditions (\ref{2}) and (\ref{3}).

Since $\min\left(\gamma\right)>m$ (\ref{2}) is satisfies.

to verify (\ref{3}), let $n\in\mathbb{N}$ and $G_{1},G_{2},...,G_{n}\in\mathcal{P}_{f}\left(\mathcal{T}_{0}\right),\,G_{1}\subset G_{2}\subset.....\subset G_{n}=F$
and $f_{i}\in G_{i},i=1,2,...,n$ and let $P\in L$.

For $n=1$, then $\alpha_{\delta}\left(G_{n}\right)+P\left(\sum_{t\in H_{\delta}\left(G_{n}\right)}f_{n}\left(t\right)\right)=a+P\left(f\left(\gamma\right)\right)\in B\subseteq A^{*}$
.

If $n>1$, then 

\[
\sum_{i=1}^{n}\alpha_{\delta}\left(G_{i}\right)+P\left(\sum_{i=1}^{n}\sum_{t\in H_{\delta}\left(G_{i}\right)}f_{i}\left(t\right)\right)
\]

\[
=\alpha_{\delta}\left(G_{n}\right)+\sum_{i=1}^{n-1}\alpha_{\delta}\left(G_{i}\right)+P\left(\sum_{t\in H_{\delta}\left(G_{n}\right)}f_{i}\left(t\right)+\sum_{i=1}^{n-1}\sum_{t\in H_{\delta}\left(G_{i}\right)}f_{i}\left(t\right)\right)
\]

\[
=a+\sum_{i=1}^{n-1}\alpha_{\delta}\left(G_{i}\right)+P\left(\sum_{t\in\gamma}f_{n}\left(t\right)+\sum_{i=1}^{n-1}\sum_{t\in H_{\delta}\left(G_{i}\right)}f_{i}\left(t\right)\right)
\]

Since $G_{n}=F$ and $\alpha_{\delta}\left(F\right)=a,H_{\delta}\left(G_{n}\right)=\gamma$.

\[
=a+\sum_{i=1}^{n-1}\alpha_{\delta}\left(G_{i}\right)+P\left(\sum_{i=1}^{n-1}\sum_{t\in H_{\delta}\left(G_{i}\right)}f_{i}\left(t\right)\right)+
\]
\[
P\left(\sum_{t\in\gamma}f_{n}\left(t\right)+\sum_{i=1}^{n-1}\sum_{t\in H_{\delta}\left(G_{i}\right)}f_{i}\left(t\right)\right)-P\left(\sum_{i=1}^{n-1}\sum_{t\in H_{\delta}\left(G_{i}\right)}f_{i}\left(t\right)\right)
\]
\[
=a+y+Q_{P,d}\left(\sum_{t\in\gamma}f_{n}\left(t\right)\right)
\]

where $y=\sum_{i=1}^{n-1}\alpha_{\delta}\left(G_{i}\right)+P\left(\sum_{i=1}^{n-1}\sum_{t\in H_{\delta}\left(G_{i}\right)}f_{i}\left(t\right)\right)\in M_{\delta}$,
$d=\sum_{i=1}^{n-1}\sum_{t\in H_{\delta}\left(G_{i}\right)}f_{i}\left(t\right)\in R$
and $P\in L$ so $Q_{P,d}\in M$. 

So we have 
\[
a+Q_{p,d}\left(\sum_{t\in\gamma}f_{n}\left(t\right)\right)\in B\in-y+A^{*}
\]

Therefore 
\[
\sum_{i=1}^{n}\alpha_{\delta}\left(G_{i}\right)+P\left(\sum_{i=1}^{n}\sum_{t\in H_{\delta}\left(G_{i}\right)}f_{i}\left(t\right)\right)\in A^{*}.
\]

This completes the induction argument, hence the proof.
\end{proof}
We can also genralized this theorem along Phulara's way easily 
\begin{thm}
Let $\left(S,+\right)$ be a dense subsemigroup of $\left(\mathbb{R},+\right)$
containing $0$ such that $\left(S\cap\left(0,1\right),\cdot\right)$is
a subsemigroup of $\left(\left(0,1\right),\cdot\right)$. Let $\left\langle C_{n}\right\rangle _{n\in\mathbb{N}}$
be decreasing family of $C_{p}^{0}$ sets in $S$ such that all $C_{i}\in p\in E\left(\mathcal{J}_{P}^{0}\right)$
and $L\in\mathcal{P}_{f}\left(\mathbb{P}\left(S,S\right)\right)$.
Then for each $\delta\in\left(0,1\right)$, there exist functions
$\alpha_{\delta}:\mathcal{P}_{f}\left(\mathcal{T}_{0}\right)\to S$
and $H_{\delta}:\mathcal{P}_{f}\left(\mathcal{T}_{0}\right)\to\mathcal{P}_{f}\left(\mathbb{N}\right)$
such that 
\begin{enumerate}
\item $\alpha_{\delta}\left(F\right)<\delta$ for each $F\in\mathcal{P}_{f}\left(\mathcal{T}_{0}\right)$,
\item if $F,G\in\mathcal{P}_{f}\left(\mathcal{T}_{0}\right)$ and $F\subset G$,
then $\max H_{\delta}\left(F\right)<\min H_{\delta}\left(G\right)$
and
\item If $n\in\mathbb{N}$ and $G_{1},G_{2},...,G_{n}\in\mathcal{P}_{f}\left(\mathcal{T}_{0}\right),\,G_{1}\subset G_{2}\subset.....\subset G_{n}$
and $f_{i}\in G_{i},i=1,2,...,n.$ with $\mid G_{1}\mid=k$ then
\[
\sum_{i\in1}^{n}\alpha_{\delta}\left(G_{i}\right)+P\left(\sum_{i\in1}^{n}\sum_{t\in H_{\delta}\left(G_{i}\right)}f_{i}\left(t\right)\right)\in C_{k}.
\]
\end{enumerate}
\end{thm}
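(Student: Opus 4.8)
The plan is to run the same induction as in the proof of Theorem \ref{Main}, but to keep track of one extra piece of data at every stage: the cardinality $k=|G_1|$ of the smallest set in a chain, which now dictates \emph{which} member $C_k$ of the family the configuration must land in. Fix an idempotent $p\in E(\mathcal{J}_p^0)$ with $C_i\in p$ for every $i$, and for each $k$ set $C_k^{*}=\{x\in C_k:-x+C_k\in p\}$. Since $p$ is idempotent each $C_k^{*}\in p$, and because $\langle C_n\rangle$ is decreasing the starred sets are nested, $C_{k+1}^{*}\subseteq C_k^{*}$; in particular every $C_k^{*}$ is a $J_p$-set near zero. I would prove by induction on $|F|$ the strengthened statement that every configuration whose smallest set $G_1$ has cardinality $k$ lands in $C_k^{*}$ (not merely $C_k$), since the starred version is what feeds the next inductive step.

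For the base case $F=\{f\}$ we have $k=1$, and applying Lemma \ref{min near zero} to the $J_p$-set-near-zero $C_1^{*}$ yields $a\in S\cap(0,\delta)$ and $H$ with $a+P(\sum_{t\in H}f(t))\in C_1^{*}$ for all $P\in L$; set $\alpha_\delta(\{f\})=a$ and $H_\delta(\{f\})=H$. For the inductive step with $|F|=N>1$, assume $\alpha_\delta,H_\delta$ are defined on all proper nonempty subsets of $F$, put $m=\max\bigcup\{H_\delta(G):\emptyset\neq G\subsetneq F\}$, and let $R$ be as in the proof of Theorem \ref{Main}. The new ingredient is to split the finite set of partial sums by level: for each $k\in\{1,\dots,N-1\}$ let
\[
M_\delta^{(k)}=\left\{\sum_{i=1}^{n}\alpha_\delta(G_i)+P\Big(\sum_{i=1}^{n}\sum_{t\in H_\delta(G_i)}f_i(t)\Big):\ \emptyset\neq G_1\subsetneq\cdots\subsetneq G_n\subsetneq F,\ |G_1|=k,\ f_i\in G_i,\ P\in L\right\},
\]
so that the induction hypothesis gives $M_\delta^{(k)}\subseteq C_k^{*}$.

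Now form
\[
B=C_N^{*}\cap\bigcap_{k=1}^{N-1}\ \bigcap_{y\in M_\delta^{(k)}}\big(-y+C_k^{*}\big).
\]
Each translate $-y+C_k^{*}$ lies in $p$ by the standard idempotent lemma (as $y\in C_k^{*}$), and $C_N^{*}\in p$, so $B\in p$ is a $J_p$-set near zero. Applying Lemma \ref{min near zero} to the augmented family $M=L\cup\{Q_{P,d}:P\in L,\ d\in R\}$, where $Q_{P,d}(y)=P(y+d)-P(d)\in\mathbb{P}(S,S)$, produces $\gamma$ with $\min\gamma>m$ and $a\in S\cap(0,\delta)$ satisfying $a+Q(\sum_{t\in\gamma}f(t))\in B$ for all $Q\in M$ and $f\in F$; set $\alpha_\delta(F)=a$, $H_\delta(F)=\gamma$. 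Conditions (1) and (2) follow exactly as before, from $a<\delta$ and $\min\gamma>m$. For (3), the telescoping identity of Theorem \ref{Main} writes a level-$k$ configuration ending in $G_n=F$ as $y+\big(a+Q_{P,d}(\sum_{t\in\gamma}f_n(t))\big)$ with $y\in M_\delta^{(k)}\subseteq C_k^{*}$ and $d\in R$; since $a+Q_{P,d}(\cdots)\in B\subseteq -y+C_k^{*}$, the sum lands in $C_k^{*}\subseteq C_k$, while the degenerate chain $n=1$, $G_1=F$, gives $a+P(\sum_{t\in\gamma}f(t))\in B\subseteq C_N^{*}\subseteq C_N$, matching $k=|G_1|=N$.

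The only genuine obstacle, beyond transcribing the argument of Theorem \ref{Main}, is the simultaneous bookkeeping in the definition of $B$: a single top term $a=\alpha_\delta(F)$ must serve every chain ending at $F$ at once, and those chains carry different levels $k=|G_1|$ and hence different target sets $C_k$. Intersecting the translates $-y+C_k^{*}$ over all levels $k<N$, together with $C_N^{*}$ for the degenerate chain, is exactly what lets one choice of $a$ work uniformly; the decreasing hypothesis on $\langle C_n\rangle$, which nests the starred sets, keeps this intersection a member of $p$ and the induction self-consistent across levels.
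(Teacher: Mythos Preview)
Your proof is correct and is exactly the ``Phulara'' modification the paper alludes to; note that the paper states this theorem without proof, merely remarking that ``we can also generalize this theorem along Phulara's way easily,'' so there is no proof in the paper to compare against. Your stratification of the set $M_\delta$ of partial configurations by the level $k=|G_1|$, together with the corresponding stratified intersection
\[
B=C_N^{*}\cap\bigcap_{k=1}^{N-1}\ \bigcap_{y\in M_\delta^{(k)}}\bigl(-y+C_k^{*}\bigr),
\]
is precisely the bookkeeping that Phulara's argument adds on top of the proof of Theorem~\ref{Main}, and the strengthened induction hypothesis (landing in $C_k^{*}$ rather than $C_k$) is the right thing to carry.

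One small remark: in your closing paragraph you credit the decreasing hypothesis on $\langle C_n\rangle$ with ``keep[ing] this intersection a member of $p$.'' That is not where it is used --- $B\in p$ simply because it is a finite intersection of members of $p$, each $-y+C_k^{*}\in p$ following from $y\in C_k^{*}$ and the standard idempotent fact, independently of any nesting. In fact your argument, as written, never invokes the inclusion $C_{k+1}^{*}\subseteq C_k^{*}$ at all, so the monotonicity hypothesis is superfluous for the conclusion you prove. This is not a gap; it just means your proof establishes slightly more than the theorem claims.
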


$\vspace{0.3in}$

\textbf{Acknowledgment:}Both the authors acknowledge the Grant CSIR-UGC
NET fellowship with file No. 09/106(0199)/2019-EMR-I and 09/106(0202)/2020-EMR-I
respectively. They also acknowledge the help of their supervisor Prof.
Dibyendu De for valuable suggestions.

$\vspace{0.3in}$

\end{document}